\definecolor{citation}{rgb}{0,.40,.80}
\newcommand \C {\mathbb C}
\newcommand \F {\mathbb F}
\newcommand \fF {\mathcal F}
\newcommand \I {\mathcal I}
\renewcommand \O {\mathcal O}
\renewcommand \P {\mathbb P}
\newcommand \Z {\mathbb Z}
\newcommand\xra{\xrightarrow}
\DeclareMathOperator \alb {alb}
\DeclareMathOperator \Hilb {Hilb}
\DeclareMathOperator \Pic {Pic}
\DeclareMathOperator \NS {NS}
\DeclareMathOperator{\Sp}{Sp}
\newcommand{\Spa}{\Sp(A[2])}
\DeclareMathOperator{\Sym}{Sym}
\newtheorem {thm} {Theorem}[section]
\newtheorem {lemma} [thm] {Lemma}
\newtheorem {prop} [thm] {Proposition}
\theoremstyle{definition}
\newtheorem {example}[thm] {Example}
\newtheorem {rmk}[thm] {Remark}
\numberwithin{equation}{section}
\newcommand{\xleftrightarrow}[2][]{\ext@arrow 3359\leftrightarrowfill@{#1}{#2}}
\newcommand{\dashto}[2][]{\ext@arrow 0359\rightarrowfill@@{#1}{#2}}
\newcommand{\xdashleftarrow}[2][]{\ext@arrow 3095\leftarrowfill@@{#1}{#2}}
\newcommand{\xdashleftrightarrow}[2][]{\ext@arrow 3359\leftrightarrowfill@@{#1}{#2}}
\def\rightarrowfill@@{\arrowfill@@\relax\relbar\rightarrow}
\def\leftarrowfill@@{\arrowfill@@\leftarrow\relbar\relax}
\def\leftrightarrowfill@@{\arrowfill@@\leftarrow\relbar\rightarrow}
\def\arrowfill@@#1#2#3#4{%
  $\m@th\thickmuskip0mu\medmuskip\thickmuskip\thinmuskip\thickmuskip
   \relax#4#1
   \xleaders\hbox{$#4#2$}\hfill
   #3$%
}
\begin{document}

\author{Katrina Honigs}
\author{Graham McDonald}
\address{Department of Mathematics\\
Simon Fraser University\\
8888 University Drive\\
Burnaby, BC, V5A 1S6\\
Canada}

\title[Theta characteristics and the fixed locus of {$[-1]$}]{Theta characteristics and the fixed locus of~$[-1]$ on some varieties of Kummer type}

\date{}

\begin{abstract}
  We study some combinatorial aspects of the fixed loci of symplectic involutions acting on hyperk\"ahler varieties of Kummer type.

  Given an abelian surface $A$ with a $(1,d)$-polarization $L$, there is an
  isomorphism $K_{d-1}A\cong K_{\hat{A}}(0,\hat{l},-1)$ between a hyperk\"ahler of Kummer type that parametrizes length-$d$ subschemes of $A$ and one that parametrizes degree $d-1$ line bundles supported on curves in $|\hat{L}|$, where $\hat{L}$ is the dual $(1,d)$-polarization on $\hat{A}$.
We examine the bijection this isomorphism gives between isolated points in the fixed loci of $[-1_A]$ when $d$ is odd, which has a combinatorics related to theta characteristics.

Along the way, we give a table of numerical values for
a formula of \cite{KMO} counting the number of components
of a symplectic involution acting on a Kummer-type variety. 
\end{abstract}

\maketitle

\section{Introduction}\label{intro}

The geometry of involutions and their fixed loci is  important to the study of hyperk\"ahler varieties.
In the case of
fourfolds of Kummer type, it was shown by Hassett and Tschinkel \cite{HasTsc} that a portion of their cohomology is spanned by classes determined by such fixed loci.
In \cite{Frei_Honigs,FHV}, the authors use a connection between these classes and 
torsion points on abelian surfaces to explore the derived equivalence of
Kummer $4$-folds over number fields, but such equivalences
are still not completely understood.

In \cite{KMO},
Kamenova, Mongardi, and Oblomkov show that the
fixed locus of any symplectic involution on a Kummer-type variety
is the union of
finitely many 
hyperk\"ahler varieties of $\mathrm{K3}^{[n]}$-type, and 
give a formula enumerating its components.
These fixed loci have been studied in the case of generalized Kummer $4$-folds, but in general have not been explicitly described.

In this paper,
as a step toward understanding this geometry further, 
we examine how the isolated points in the fixed loci of symplectic involutions change across an isomorphism that arises from a Fourier--Mukai transform. In order to accomplish this task, we develop a combinatorial tool relating to the Weil pairing on the $2$-torsion points of an abelian surface with a polarization of odd degree, which is of independent interest.

\medskip

Given an abelian surface $A$,
the \textit{generalized Kummer variety} $K_n(A)$ is the $2n$-dimensional variety given by the
fiber of the summation map  $\Sigma:\Hilb^{n+1}(A)\to A$
over the identity point of $A$.
The variety $K_1A$ is the familiar Kummer K3 surface.
The involution on $K_n(A)$ induced by $[-1_A]:A\to A, a\mapsto -a$ is symplectic.
All of $K_1(A)$ is fixed by $[-1_A]$, but the fixed locus of the action of $[-1_A]$ on
the fourfold $K_2A$ consists of a
Kummer K3 surface as well as $36$ isolated points:
one is supported on the identity and the other $35$ are three distinct points in $A[2]$ \cite{HasTsc,Tari}.

If we fix a polarization $H$ on $A$, we may
also form Kummer-type varieties using $H$-stable sheaves on $A$.
For instance, 
if $A$ has a (symmetric) $(1,d)$-polarization $L$ 
and we take $l=[L]$, the points of the Kummer $2(d-1)$-fold
$K_A(0,l,s)$ parametrize certain
sheaves of rank $0$, N\'eron-Severi class~$l$ and Euler characteristic $s$, i.e.\
rank~ $1$ torsion free sheaves supported  on irreducible curves in the linear system $|L|$.
Generically, these sheaves are line bundles of degree $s+\frac{l^2}{2}=s+d$.
Pullback by $[-1_A]$ also gives a symplectic involution on $K_A(0,l,s)$. 

These two moduli constructions are closely related.
When $A$ has Picard rank $1$, 
there is an isomorphism
given by a Fourier--Mukai transform $\Psi$ where
$\Psi\circ[-1_A]=[-1_{\hat{A}}]\circ\Psi$:
\begin{equation}\label{psi}
\Psi:K_{d-1}(A)\xra{\sim}  K_{\hat{A}}(0,\hat{l},-1), 
\end{equation}  
where
$\hat{l}$ is the N\'eron-Severi class of a $(1,d)$-polarization $\hat{L}$ on the dual abelian variety $\hat{A}$ \cite{Gulbrandsen,Yoshioka}.
In the fourfolds case, all the symplectic involutions of these Kummer-type varieties are known to be translations of $[-1_A]$, so understanding
$[-1_A]$ 
captures the essential behavior of all symplectic involutions.

 The points in $K_{\hat{A}}(0,\hat{l},-1)$ fixed by $[-1_{\hat{A}}]$ are supported on curves from either of two eigenspaces of the linear system $|\hat{L}|$ under the action of $[-1_{\hat{A}}]$. 
When $d=3$, it was shown in  \cite{Frei_Honigs}
this distinction
partitions
the $36$ isolated points
into a set of $16$ and of $20$.
 However,
such a partition is not immediately obvious in the fixed locus of $K_2(A)$, even though
 $\Psi$ gives a bijection between these sets of isolated points.

 In this paper, we characterize the partition both numerically in terms of theta characteristics,
which are the quadratic forms on $A[2]$ associated to the Weil pairing determined by $L$,
 and as distinct orbits under the symplectic group of $A[2]$.
 We also use our methods to
identify, for $\xi\in K_{d-1}(A)$, singular points in the supporting curve of $\Psi(\xi)$.
The main insight in these results is the application of the work of Gulbrandsen \cite{Gulbrandsen} on the supporting curves of the images of $\Psi$ to this case.

\medskip

In the following statements of results, we take $A$ to be an abelian surface of Picard rank~$1$ with a symmetric $(1,d)$-polarization $L$ where $d$ is odd. We consider the isomorphism $\Psi$ as in \eqref{psi}.
Let $\xi=(u_1,\ldots, u_d)\in K_{d-1}A$ be a subscheme of $A$
that consists of $d$ distinct points in $A[2]$ and
is an isolated point in the fixed locus of $[-1_A]$ acting on
$K_{d-1}A$. Let $S'$ be the set of all such $\xi$.

\begin{thm}[Prop.~\ref{determinant}]\label{thm.one} Let $q$ be a theta characteristic associated with~$L$. For any $\xi\in S'$,
  the value of $q(\xi):=\sum_{i=1}^{d}q(u_i)\in \F_2$ determines which eigenspace of $[-1_{\hat{A}}]$ acting on
  $|\hat{L}|$ contains the supporting curve of $\Psi(\xi)$.
\end{thm}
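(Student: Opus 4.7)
The strategy is to realize the supporting curve $C_\xi\subset\hat A$ of $\Psi(\xi)$ as the zero locus of an explicit determinantal section $s_\xi\in H^0(\hat A,\hat L)$, and then to determine the $[-1_{\hat A}]^*$-eigenvalue of $s_\xi$ by tracking the Mumford equivariant structure of $L$ at each $2$-torsion point $u_i$.

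Following Gulbrandsen's description of the Fourier--Mukai transform underlying $\Psi$, for $\hat a\in\hat A$ the twist $L_{\hat a}:=L\otimes\mathcal{P}_{\hat a}$ satisfies $h^0(L_{\hat a})=d$, and one forms the evaluation morphism of rank-$d$ vector bundles on $\hat A$,
\[
\mathrm{ev}_\xi\colon\mathcal{V}:=(p_{\hat A})_*(p_A^* L\otimes\mathcal{P})\longrightarrow\mathcal{W}:=(p_{\hat A})_*(p_A^*(L\otimes\mathcal{O}_\xi)\otimes\mathcal{P}),
\]
whose determinant cuts out $C_\xi$. By Mukai's computation $(\det\mathcal{V})^\vee\cong\hat L$, while $\det\mathcal{W}\cong\bigotimes_{i=1}^d\mathcal{P}_{u_i}$, which is trivial since $\sum_i u_i=0$. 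Thus $s_\xi:=\det(\mathrm{ev}_\xi)$ may be regarded as a section of $\hat L$, well-defined up to the chosen trivialization of $\det\mathcal W$.

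Because $[-1_A]\xi=\xi$, the section $s_\xi$ is an eigenvector of $[-1_{\hat A}]^*$ on $H^0(\hat A,\hat L)$. The key observation is that each $u_i\in A[2]$ is individually fixed by $[-1_A]$, so $\mathcal{W}$ canonically decomposes as $\bigoplus_{i=1}^d\mathcal{W}_i$ with each $\mathcal{W}_i$ an equivariant line bundle on $\hat A$, and the action of $[-1_A]$ on the stalk $L_{u_i}$ is multiplication by $(-1)^{q(u_i)}$ by the very definition of the theta characteristic $q$ as the Mumford quadratic form attached to the symmetric line bundle $L$. Taking the determinant of this diagonal action yields
\[
[-1_{\hat A}]^*\,s_\xi\;=\;\Bigl(\prod_{i=1}^d (-1)^{q(u_i)}\Bigr)\,s_\xi\;=\;(-1)^{q(\xi)}\,s_\xi,
\]
after comparing the induced reference actions of $[-1_{\hat A}]^*$ on $\det\mathcal{W}\otimes(\det\mathcal{V})^\vee$ and on $\hat L$.

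The main obstacle will be the careful bookkeeping of equivariant structures: one must verify that the identifications $\det\mathcal{W}\cong\mathcal{O}_{\hat A}$ and $(\det\mathcal{V})^\vee\cong\hat L$ are $[-1_{\hat A}]$-equivariant, so that the Mumford signs on the stalks $L_{u_i}$ pass faithfully to the sign of $s_\xi$ as a global section of $\hat L$. The hypothesis that $d$ is odd is used at this step: it guarantees that $\phi_L$ restricts to an isomorphism $A[2]\xrightarrow{\sim}\hat A[2]$, so the theta characteristic $q$ on $A[2]$ transfers unambiguously to $\hat A[2]$ and is compatible with the Mumford form of $\hat L$ on the relevant fixed points. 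Once this bookkeeping is settled, the scalar $(-1)^{q(\xi)}$ records whether $s_\xi$ lies in the $+$ or $-$ eigenspace of $H^0(\hat A,\hat L)$, and hence determines which eigenspace of $|\hat L|$ contains $C_\xi$.
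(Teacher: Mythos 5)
Your proposal is correct in substance but takes a genuinely different route from the paper. The paper works pointwise: for each $x\in\hat A[2]$ it examines the evaluation map $(\star)\colon H^0(A,L\otimes P_x)\to H^0(A,\mathcal O_\xi\otimes L\otimes P_x)$ from the long exact sequence of $0\to\I_\xi\otimes L\otimes P_x\to L\otimes P_x\to\O_\xi\otimes L\otimes P_x\to 0$, compares the determinants of the $[-1_A]^*$-action on domain (computed from the eigenspace dimensions $\frac{d\pm1}{2}$ and the parity of the theta characteristic) and codomain (which is $(-1)^{q(\xi)}$, by the same Mumford signs you use), and concludes that $(\star)$ fails to be an isomorphism precisely at the $2$-torsion points of one of the sets $6_1$, $10_1$; the eigenspace of $|\hat L|$ is then read off from the base-locus description of \S\ref{base}. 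Your argument packages the same fiberwise computation into the determinant of the global evaluation morphism $\mathcal V\to\mathcal W$ and reads off the $[-1_{\hat A}]^*$-eigenvalue of the defining section $s_\xi\in H^0(\hat A,\hat L)$ directly; the fiber of your $\mathrm{ev}_\xi$ at $x\in\hat A[2]$ is literally the paper's $(\star)$. What your route buys is a cleaner conceptual statement (the equation of $D_{\Psi(\xi)}$ is an eigenvector with eigenvalue $(-1)^{q(\xi)}$ up to a constant); what it costs is the equivariant bookkeeping you flag, which you defer rather than carry out. Note, though, that for the statement as given you do not actually need to resolve that constant: it suffices that the comparison factor between the equivariant structures on $\det\mathcal W\otimes(\det\mathcal V)^\vee$ and on $\hat L$ is independent of $\xi$, which is clear since those bundles do not depend on $\xi$; the paper's sharper Proposition~\ref{determinant} (which eigenspace corresponds to which value of $q(\xi)$, depending on $d\bmod 4$) is where that constant must be pinned down, and your sketch as written would not yet distinguish the two cases. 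The paper's pointwise method also yields, essentially for free, the identification of the specific $2$-torsion points lying on $D_{\Psi(\xi)}$ and of its singular points (Remark~\ref{singular}, Proposition~\ref{Bx}, \S\ref{onefive}), information that your global determinant does not directly expose.
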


When $d=3$, $S'$ consists of $35$ elements
$\xi=(u_1,u_2,u_3)\in K_2A$.
For $15$ of the $\xi\in S'$, $q(u_1)+q(u_2)+q(u_3)=0$ and for the other $20$, $q(u_1)+q(u_2)+q(u_3)=1$.

For any odd $d$, there is also a natural action of the symplectic group of $A[2]$ on $S'$, which respects the value of $q(\xi)$.
 In the case $d=3$, we have the following result.

\begin{thm}[Prop.~\ref{twoorbits}]
When $d=3$, the action of $\Sp(A[2])$ on $S'$
has two orbits, which coincide with the two possible values of $q(\xi)$.
\end{thm}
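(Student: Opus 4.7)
The plan is to reduce the quantity $q(\xi)$ to a single value of the Weil pairing $\omega$ associated with $L$ and then invoke standard transitivity facts for $\Sp(A[2]) \cong \Sp_4(\F_2)$. First, since $u_1 + u_2 + u_3 = 0$ and the $u_i$ are $2$-torsion, we have $u_3 = u_1 + u_2$. Using the defining identity $q(x+y) = q(x) + q(y) + \omega(x,y)$ of a theta characteristic, one computes in $\F_2$
\[ q(\xi) = q(u_1) + q(u_2) + q(u_1 + u_2) = \omega(u_1, u_2). \]
In particular this value is symmetric in the $u_i$, independent of the choice of theta characteristic, and $\Sp(A[2])$-invariant. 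Hence $S'$ breaks into at least two $\Sp(A[2])$-stable subsets according to $q(\xi) \in \F_2$, and it remains to show each is a single orbit of the expected size $15$ or $20$ (summing to $|S'|=35$).

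When $q(\xi) = 0$, the span $V := \langle u_1, u_2 \rangle \subset A[2]$ is a $2$-dimensional totally isotropic subspace, hence Lagrangian, and $\{u_1,u_2,u_3\}$ is just the set of its three nonzero vectors; conversely every Lagrangian $2$-plane in $A[2]$ yields such a triple. Since $\Sp(A[2])$ acts transitively on Lagrangian subspaces of the symplectic space $A[2]$, this gives a single orbit, of size $15$.

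When $q(\xi) = 1$, any two of the $u_i$ form a symplectic pair: for instance $\omega(u_1,u_3) = \omega(u_1, u_1+u_2) = \omega(u_1,u_2) = 1$, and similarly $\omega(u_2,u_3)=1$. The group $\Sp(A[2])$ acts transitively on ordered symplectic pairs, since each such pair extends to a symplectic basis and $\Sp(A[2])$ acts simply transitively on symplectic bases. A symplectic isomorphism carrying $(u_1,u_2)$ to $(u_1',u_2')$ automatically sends $u_3 = u_1+u_2$ to $u_3' = u_1'+u_2'$, so this yields a single orbit, of size $20$. No step presents a serious obstacle; the entire content is concentrated in the opening identity $q(\xi) = \omega(u_1,u_2)$, after which the result is a standard orbit count in $\Sp_4(\F_2)$.
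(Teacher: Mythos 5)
Your proof is correct, but it takes a genuinely different route from the paper's. You first observe that for a triple summing to the identity, $q(\xi)=q(u_1)+q(u_2)+q(u_1+u_2)=\langle u_1,u_2\rangle$, which immediately gives both the independence of the choice of theta characteristic and the $\Sp(A[2])$-invariance (the paper obtains these from Lemma~\ref{qform} and a separate lemma on $q(T\xi)$). You then identify the $q(\xi)=0$ triples with the $15$ Lagrangian planes of $A[2]$ and the $q(\xi)=1$ triples with the $20$ unordered hyperbolic pairs, and invoke the standard Witt-type transitivity of $\Sp_4(\F_2)$ on Lagrangian subspaces and on ordered hyperbolic pairs (simple transitivity on symplectic bases: $15\cdot 8\cdot 3\cdot 2=720=|\Sp_4(\F_2)|$). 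The paper instead argues with explicit transvections: it moves a base point $(ab',bc',ca')$ around using $T_v$ for $v\in 10_1$, uses $T_c\circ T_b\circ T_a$ to swap the two triples supported entirely in $6_1$, and for the $15$-element class simply asserts that transitivity ``can be checked directly.'' Your argument is cleaner and self-contained, yields the orbit sizes $15$ and $20$ as a byproduct, and eliminates the unverified direct check; the paper's argument, on the other hand, ties the orbits concretely to the geometric data (the curves $B_x$ of Proposition~\ref{Bx} and the Hudson table) and exhibits the group elements realizing the transitivity. Both are valid proofs of the statement.
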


Our analysis of theta characteristics
can be visualized using the following table listing the points of $A[2]$, which we call a \textit{Hudson table}:
\[
\begin{tabular}{c|ccc} 
$1$ & $ab'$ & $bc'$ & $ca'$\\
\hline\vrule height 12pt width 0pt
$ac'$ & $a'$ & $c$ & $bb'$\\ 
 $ba'$ &$cc'$  & $b'$ & $a$\\ 
 $cb'$ & $b$ & $aa'$ &$c'$
\end{tabular}
\]
We name this table for its appearance in the classic text of Hudson \cite{Hudson} on the (singular) Kummer surfaces associated to  principally polarized abelian surfaces. There, the table is given in relation to the $(16,6)$ configuration of points and planes.
These configurations have a very rich combinatorics, which is explored in, for instance, Gonzalez-Dorrego \cite{Gonzalez}.
We observe that for any polarization of odd degree, the Hudson table captures information on the theta characteristics and we find it useful in our arguments. Its application in these contexts appears to be novel.

The characterization of the fixed locus of $[-1_{\hat{A}}]$ acting on $K_{\hat{A}}(0,\hat{l},-1)$ in \cite{Frei_Honigs} determines the set of all supporting curves of $\Psi(\xi)$ for $\xi\in S'$ when $d=3$, but does not indicate which individual elements $\xi\in S'$ matches with each curve. 
The results of Theorem~\ref{thm.one} allow us to now determine the supporting curve of $\Psi(\xi)$ for each $\xi\in S'$. Here is an example of a result that we phrase in terms of the Hudson table:

\begin{thm}[Proposition~\ref{Bx}]
 Let $d=3$ and $\xi=(u_1,u_2,u_3)\in S'$ such that $q(\xi)=1$.
There is a  unique $x\in A[2]$  in the Hudson table
that is distinct from the $u_i$ but shares a row or column with each of them.  The supporting curve of $\Psi(\xi)$ is the unique curve in $|\hat{L}|$ with one nodal singularity at the image of $x$ under the isomorphism $A[2]\simeq\hat{A}[2]$.
\end{thm}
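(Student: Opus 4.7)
The plan is to decompose the statement into three stages: combinatorial uniqueness of $x$ arising from the Hudson table structure, identification of the supporting curve of $\Psi(\xi)$ as singular at the image of $x$, and uniqueness of the resulting nodal curve in $|\hat{L}|$.

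For the combinatorial part, I would interpret the rows and columns of the Hudson table as specific $4$-element subsets of $A[2]$ reflecting the symplectic structure induced by the Weil pairing associated with $L$. The triple $(u_1,u_2,u_3)$ with $u_1+u_2+u_3=0$ spans a $2$-dimensional $\F_2$-subspace $V \subset A[2]$, and the identity
\[
q(u_1)+q(u_2)+q(u_1+u_2) = \langle u_1,u_2\rangle
\]
shows that the hypothesis $q(\xi)=1$ is equivalent to $V$ being non-isotropic. Under this constraint, a direct enumeration against the Hudson table produces a unique $x \in A[2]\setminus\{u_1,u_2,u_3\}$ sharing a row or column with each $u_i$; it is precisely the non-isotropy of $V$ that forces existence and uniqueness simultaneously, since an isotropic $V$ (the case $q(\xi)=0$) would produce either no such $x$ or a whole family.

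For the geometric part, I would invoke Gulbrandsen's description \cite{Gulbrandsen} of the supporting curve $C_\xi \in |\hat{L}|$ of $\Psi(\xi)$ via a Fitting-ideal construction. For $\xi$ supported on three distinct $2$-torsion points summing to zero, the singularities of $C_\xi$ correspond to points where two independent translates of sections of $L$ both vanish on $\xi$. The row/column relation between $x$ and each $u_i$ in the Hudson table produces exactly such a double vanishing: under the Weil-pairing identification $A[2] \simeq \hat{A}[2]$, it forces $C_\xi$ to have multiplicity at least $2$ at the image of $x$. Uniqueness of the curve then follows from a dimension count: $|\hat{L}| \cong \P^2$ and imposing a singular point at a fixed location is a codimension-$2$ condition on the linear system. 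That the resulting singularity is truly a node, rather than a cusp or worse, follows by matching against the classification of supporting curves of $\Psi(\xi)$ for $q(\xi)=1$ already obtained in \cite{Frei_Honigs}.

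The hardest step will be the geometric one: making precise the identification of the combinatorial incidence ``$x$ shares a row or column with $u_i$'' with Gulbrandsen's double-vanishing condition. This will require careful bookkeeping of the Weil-pairing isomorphism $A[2] \simeq \hat{A}[2]$ and a concrete matching between the row/column tetrads of the Hudson table and the sections of $L$ that enter into the determinantal description of $C_\xi$. Once this translation is in place, the combinatorics on $A[2]$ and the geometry of curves in $|\hat{L}|$ become the same object viewed from two sides, and the dimension count closes out the argument.
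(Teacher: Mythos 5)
Your overall architecture matches the paper's: a combinatorial uniqueness statement for $x$ plus a cohomological criterion detecting a singular point of the supporting curve, and your observation that $q(\xi)=\langle u_1,u_2\rangle$, so that $q(\xi)=1$ is exactly non-isotropy of the plane spanned by $\xi$, is a correct and clean repackaging of the azygetic condition. However, the step you yourself flag as hardest --- translating ``$x$ shares a row or column with each $u_i$'' into the double-vanishing condition --- is precisely where the content of the proof lies, and you have not supplied it. The paper's mechanism is: by Lemma~\ref{prod} the incidence condition says $u_i\in 6_x$ for all $i$; by \S\ref{base}, for $x\in 10_1$ (so $q_x$ even) the $+1$-eigenspace of $[-1_A]^*$ on $H^0(A,L\otimes P_x)$ is $2$-dimensional with base locus exactly $6_x$; hence if $\xi\subset 6_x$ that entire eigenspace lies in the kernel of \hyperlink{star}{$(\star)$}, so $\dim H^0(A,\I_\xi\otimes L\otimes P_x)\geq 2$ and $x$ is a singular point of $D_{\Psi(\xi)}$ by Remark~\ref{singular}. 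Equivalently, $[-1_A]^*$ acts on $H^0(A,\O_\xi\otimes L\otimes P_x)$ with eigenvalue $-1$ of multiplicity $3$ against $1,1,-1$ on the domain. Without this eigenspace/base-locus input, your claim that the row/column relation ``produces exactly such a double vanishing'' is an assertion rather than an argument.

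Two further points are genuinely wrong or incomplete. First, the dimension count: imposing a singularity at a prescribed point is three linear conditions ($s(x)=0$ and $ds(x)=0$), hence codimension $3$ in $|\hat{L}|\cong\P^2$, so a naive count predicts \emph{no} such curve rather than a unique one. Existence and uniqueness of the nodal curve $B_x$ is special to the symmetric situation --- a curve of the pencil $|\hat{L}|^+$ through $x\in 10_1$ (one condition on a $\P^1$, hence unique) is automatically singular there because the linear term of a symmetric section vanishes at a $2$-torsion point outside the base locus of its eigenspace --- and the paper simply quotes this from Naruki together with the classification of $R$ from \cite{Frei_Honigs}. Second, uniqueness of $x$ does not follow ``simultaneously'' from non-isotropy alone: the conditions $q_1(xu_i)=1$ amount to two affine-linear constraints $\langle x,u_i\rangle=1+q_1(u_i)$, which cut out a four-element coset of $V^\perp$, and one must still verify that exactly one element of that coset satisfies $q_1(x)=0$. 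That finite check is exactly the case analysis of the two Hudson-table configurations in \eqref{Bx_examples} that the paper carries out; your proposal defers it to ``direct enumeration,'' which is acceptable, but it is not a consequence of non-isotropy by itself.
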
  

Our methods give information
about singular points of curves in the linear system $|\hat{L}|$
for higher (odd) values of $d$ as well, which we demonstrate by analyzing some  points of $K_4A$ in \S\ref{onefive}.

We also  numerical values and the generating function for the formula in \cite{KMO}
enumerating components of fixed loci on a variety of Kummer type.
The combinatorics of these fixed loci relate to other invariants of these varieties, such as cohomology (cf.~\cite{HasTsc}); these computations may be useful as a reference to researchers working in this~area.

\subsubsection*{Plan of paper}
In \S\ref{BN} we give further background on hyperk\"ahler varieties of Kummer type.
In \S\ref{formula} we give the values for the formula \cite[Theorem~3.9]{KMO}.
The remaining sections are focused on
results relating to the isomorphism \eqref{psi}:
In \S\ref{group} 
we discuss theta characteristics and how to visualize them in a Hudson table. 
In \S\ref{sec.iso}, we define the
isomorphism \eqref{psi} and prove results about the supporting curves of sheaves in the image of $\Psi$.
In \S\ref{fourfolds}, we review results on
the isolated points in the fixed locus of $[-1_A]$ and $[-1_{\hat{A}}]$ acting on each
of $K_2(A)$ and $K_{\hat{A}}(0,\smash{\hat{l}},-1)$ when $d=3$, and then
characterize the supporting curves of the images of the points in $K_2(A)$ under $\Psi$.
In \S\ref{sp} we examine the action of the symplectic group $\Spa$ on part of the fixed locus. Finally in \S\ref{onefive},
we give example applications 
of our methods to a $(1,5)$-polarized abelian surface.

\section*{Acknowledgements}
K.H.\ thanks Nils Bruin for helpful conversations at a formative stage of the project and was supported by an NSERC Discovery Grant.
G.M.\ was supported by an NSERC USRA.

\section{Background and notation}\label{BN}

Many of the varieties we consider are constructed using an abelian surface~$A$, which has dual abelian variety $\hat{A}$. We write the involution
$[-1_A]:A\to A, a\mapsto -a$.
As in \cite{Hudson},
we will use
juxtaposition to denote the group law on $A[2]$,
writing the identity point of $A$ as $1$, reserving $+$ for formal sums of divisors.
Given a line bundle $L$ on $A$, we denote its associated projectivized linear system by $|L|\cong \P H^0(A, L)$.
For simplicity, we work over the base field~$\C$, but it is possible to consider these constructions over other fields.

Hyperk\"ahler varieties are irreducible holomorphic symplectic varieties with a unique nondegenerate holomorphic two-form. An involution is referred to as ``symplectic'' if it respects this two-form. We encounter two deformation types of these varieties in this paper. Our primary focus is those of Kummer type, which are deformation equivalent to a generalized Kummer variety $K_nA$. We also encounter those of $\mathrm{K3}^{[n]}$ type, which are deformation equivalent to the Hilbert scheme of $n$ points on a K3 surface.

Some varieties of Kummer type can be constructed
as Albanese fibers of moduli of stable sheaves on an abelian surface. Let $A$ be an abelian surface with a polarization $H$ and a Mukai vector $v=(r,l,s)$.
The moduli space $M_A(v)$ parametrizes
$H$-stable sheaves on $A$ with Mukai vector $v$, that is, rank $r$, N\'eron-Severi class of the determinental line bundle $l$, and Euler characteristic $s$.
If $v$  is
primitive and positive where $v^2:=l^2-2rs\geq 0$ and $H$ is $v$-generic, then
the moduli space of stable sheaves $M_A(v)$ is a non-empty, smooth, projective and irreducible variety of dimension $v^2+2$.
We refer the interested reader to the papers \cite{Mukaisymp} and \cite{Yoshioka}, or the summary in \cite{Frei_Honigs} for more details on this construction and hypotheses.

In order to construct $K_A(v)$, it is necessary to use the Fourier--Mukai transform whose kernel is the Poincar\'e bundle $P_A$ of $A$, defined as follows:
\begin{align}
  \Phi_{P_A}:D(A)&\to D(\hat{A})\\
  \fF&\mapsto  Rp_{2*} (Lp_1^*(\fF)\otimes^{L} P_A)
\end{align}
The maps $p_1,p_2$ are the projections from $A\times \hat{A}$ to its factors. The functor $\Phi_{P_A}$ gives an equivalence between the bounded derived categories of coherent sheaves of $A$ and $\hat{A}$ \cite{Mukai}.

The Albanese variety of a moduli space $M_A(v)$ constructed as above is $A\times \hat{A}$. It is briefer to show the morphism to the Albanese torsor:
\begin{align}\label{alb}
\alb: M_A(v)&\to \Pic^l_A\times \Pic^{\hat{l}}_{\hat{A}}
\\ \notag
\fF&\mapsto (\det(\fF),\det(\Phi_P(\fF)))
\end{align}
where 
$\hat{l}$ is the N\'eron-Severi class of $\det(\Phi_P(\fF))$ for $\fF\in M_A(v)$,
which is  the negative of the Poincar\'e dual of $l$ \cite[Prop.~1.17]{MukaiFF}.
We define $K_A(v)$  to be a fiber of the Albanese morphism. All fibers are isomorphic, so without loss of generality, we may choose the convention that $K_A(v)$ is a fiber over a symmetric line bundle, in which case pullback by $[-1_A]$ is a symplectic involution on $K_A(v)$. 

In the case where $v=(1,0,-(n+1))$,
$K_A(v)$ is isomorphic to $K_{n}A$:
$K_A(1,0,-(n+1))$
parametrizes subschemes of length $n+1$ that sum to the identity in $A$.

The other moduli spaces of this type we will work with have Mukai vectors of the form $v=(0,l,s)$, where $\NS(A)=\Z l$ is the N\'eron-Severi class of a symmetric $(1,d)$-polarization $L$ on $A$.
For any $s\in \Z$,
the preimage of the Albanese morphism \eqref{alb}
on $M_A(0,l,s)$  over
$\{L\}\times \Pic^{\hat{l}}_{\hat{A}}$
is the relative compactified Jacobian
$\smash{\overline{\mathrm{Pic}}^{s+d}_{\mathcal{C}/|L|}}$ where $\mathcal{C}$ is the tautological family of curves in~$|L|$. 
Its elements 
consist of rank~$1$ torsion-free sheaves of Euler characteristic $s$
supported on the curves of the linear system $|L|$, which have arithmetic genus $d+1$.
When the supporting curves are 
smooth, these sheaves are line bundles of degree $s+d$.

Fix a symmetric line bundle $M\in \Pic^{\hat{l}}(\hat{A})$. 
We may then take $K_A(0,l,s)\subset\smash{\overline{\mathrm{Pic}}^{s+d}_{\mathcal{C}/\mathbb{P}|L|}}$ to be the fiber of the Albanese map \eqref{alb} over $(L,M)$.
Applying the universal property of the Jacobian
to the inclusion $C\hookrightarrow A$,
when $C$ is smooth,
induces a map  $j_C:\Pic^0_C\to A$. This map sends a line bundle $\O(D)$ on $C$ to the sum $\Sigma D$, which we obtain by summing points on the curve using the group law of $A$.
It was shown in \cite[\S 6]{Frei_Honigs} that for any smooth curve $C\in |L|$,
the points in $K_A(0,l,s)$ supported on $C$ are a translation of the fiber of $j_C$ over the identity in $A$. When $C$ is a singular curve in $|L|$,
this notion extends in a natural way
using the theory of generalized divisors (cf.~\cite{Kass}).

\section{A formula for the number of fixed loci}\label{formula}

In \cite[Theorem~3.9]{KMO}, the authors give a formula for the number of components of each dimension in the
fixed locus 
of a symplectic involution on a Kummer-type variety.
To find this formula, they consider the fixed locus of $[-1_A]$ acting on $K_nA$ and relate it to
points on $\Sym^{n+1}A$ that are supported on 
combinations of pairs of points $(p,-p)$ for $p\in A$ and tuples of points in $A[2]$ that sum to the identity.
The possible configurations of these supporting points dictate the
number of components of each dimension.
Thus, their  formula depends on the 
combinatorics problem of
choosing $n$ elements of $\F_2^4$ that sum to $0$.

In the following formula, the sum is taken over all subsets $I\subseteq \F_2^4$ that sum to $0$, $|I|$ denotes the number of elements of $I$, and
$\binom{n}{k}=0$ if $k>n$, $k<0$, or $k$ is not an integer. 

\begin{thm}[{{\cite[Theorem 3.9]{KMO} Kamenova, Mongardi, Oblomkov}}]
  Let $X$ be a $2n$-dimensional hyperkahler manifold of Kummer type, and let $\iota$ be a symplectic involution on $X$ that acts nontrivially on $H^3(X)$.
  Define
\begin{equation}\label{original}
  N_m^n:=\sum_{I,\sum I=0}\binom{|I|}{\frac{n-|I|}{2}-m}
\end{equation}  
The fixed locus of $\iota$ consists of $N^{n+1}_m$ connected components of dimension~$2m$. 
\end{thm}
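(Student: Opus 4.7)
The plan is, first, to reduce to the model case $X = K_nA$ with $\iota$ induced by $[-1_A]$ on an abelian surface $A$. The number of components of each dimension in the fixed locus of $\iota$ is a discrete invariant, constant under deformation of the pair $(X,\iota)$. Combined with the classification of symplectic involutions on Kummer-type varieties, the hypothesis that $\iota$ acts nontrivially on $H^3(X)$ singles out those involutions that deform to $[-1_A]$ (pure translations by $2$-torsion points act trivially on $H^3$), up to a translation that can be absorbed into $[-1_A]$ by shifting the origin.

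Next, via the $[-1_A]$-equivariant Hilbert--Chow morphism $\Hilb^{n+1}(A) \to \Sym^{n+1}(A)$, every $[-1_A]$-fixed subscheme in $K_nA$ is supported on a symmetric $0$-cycle of the form
\[
\sum_{u\in A[2]} r_u\, [u] + \sum_{j} n_j\bigl([p_j] + [-p_j]\bigr), \qquad p_j\notin A[2],
\]
of total length $n+1$.  Because $2u = 0$ for $u\in A[2]$ and $p_j + (-p_j) = 0$, the condition that the cycle sums to zero in $A$ collapses to $\sum_{u:\, r_u\text{ odd}} u = 0$; equivalently, the set $I := \{u : r_u\text{ odd}\}\subseteq A[2]\cong\F_2^4$ satisfies $\sum I = 0$, matching the outer summation in the formula.

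The main step is to show that the connected components of the fixed locus correspond bijectively to pairs $(I, J)$ with $I\subseteq A[2]$, $\sum I = 0$, and $J\subseteq I$ of size at most $(n+1-|I|)/2$.  Writing $j = |J|$ and $k = (n+1-|I|)/2 - j$, one shows that each pair is realized by the stratum consisting of reduced points at the $2$-torsion points of $I\setminus J$, the isolated $[-1_A]$-invariant length-$3$ subscheme $\mathfrak{m}_u^2 \subset \O_{A,u}$ at each $u\in J$, and $k$ free pairs ranging over configurations on the Kummer surface $A/[\pm 1]$ away from the diagonals.  This stratum has complex dimension $2k$, and setting $2m = 2k$ gives $j = (n+1-|I|)/2 - m$; summing $\binom{|I|}{j}$ over admissible $I$ with $\sum I = 0$ then yields exactly $N^{n+1}_m$ components of dimension $2m$.

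The hard part will be the local analysis at $2$-torsion points: one must show that the $[-1_A]$-fixed locus of the punctual Hilbert scheme at such a point consists of the isolated ideal $\mathfrak{m}^2$ together with positive-dimensional strata that all lie in closures of strata parameterized by free pairs approaching the $2$-torsion point.  This ensures that no further components arise from higher-multiplicity invariant local subschemes, and that distinct pairs $(I,J)$ correspond to disjoint connected components.  Connectedness of each stratum's closure must also be verified in order to produce an exact count of connected components rather than merely a lower bound.
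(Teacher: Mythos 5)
This theorem is not proved in the paper at all: it is quoted from \cite[Theorem 3.9]{KMO}, and the only account the paper gives of its proof is the one-paragraph summary at the start of \S\ref{formula} --- reduce to $[-1_A]$ acting on $K_nA$ and analyze the supporting cycles in $\Sym^{n+1}A$ as combinations of pairs $(p,-p)$ and tuples of $2$-torsion points summing to the identity. Your outline follows exactly that route, and your bookkeeping is internally consistent: the lengths add up ($|I|+2|J|+2k=n+1$), a stratum with $k$ free pairs has dimension $2k$, the zero-sum condition correctly collapses to $\sum I=0$, and specializing to $n=2$ reproduces the known answer for $K_2A$ ($35+1$ isolated points and one K3 surface).

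However, as written this is a plan rather than a proof, and the two steps you defer carry essentially all of the content. First, the reduction to the model case needs both the deformation-invariance of the component count of $\Fix(\iota)$ (i.e.\ that the fixed loci form a smooth proper family over the deformation space of the pair $(X,\iota)$) and the classification result that every symplectic involution acting nontrivially on $H^3$ deforms to $[-1_A]$; both are substantial external inputs that must be cited precisely, not just gestured at. Second, the local analysis at a $2$-torsion point is genuinely delicate: the $[-1]$-fixed locus of the punctual Hilbert scheme there contains, besides $\mathfrak{m}_u^2$, whole families of invariant ideals (the $\P^1$ of invariant colength-$2$ ideals, the $\P^1$ of invariant curvilinear colength-$3$ ideals, monomial ideals of every colength), and you must show that all of these are swallowed by closures of free-pair strata, that $\mathfrak{m}_u^2$ is isolated in the relevant normal directions, that no longer invariant punctual subscheme spawns an extra component, and that distinct pairs $(I,J)$ give disjoint \emph{connected} components rather than pieces of a single one. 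You flag this yourself as ``the hard part,'' which is exactly right --- but until it is carried out the formula is a well-motivated conjecture, and the appropriate move in the context of this paper is to cite \cite{KMO} for precisely these steps.
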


The reference has a typographical error
switching
the top and bottom of the binomial notation in \eqref{original}.

Let $\gamma_i$ be number of ways to choose $0\leq i\leq 16$ elements of $\F_2^4$ that sum to $0$. Its values are listed in Table~\ref{gammai}.
We may write $N_n^m$ in terms of the $\gamma_i$ as follows:
\begin{equation}
	N_m^n=\sum_{i=0}^{16}\gamma_i\cdot\binom{i}{\frac{n-i}{2}-m}
	\label{eq: final_count_formula}
      \end{equation}

The values of $\gamma_i$ are given by the following generating function in the case $r=4$.

\begin{thm}[Song \cite{OEIS}]
  The following is the generating function for the number of subsets $I$ of the vector space $\mathbb{F}_2^r$
 so that $|I|=i$ and $\sum_{j\in I}j=0$: 
	\begin{equation}
		(x+1)^{2^{r-1}}\left(\sum\limits_{i=0}^{2^{r-1}}\binom{2^{r-1}}{2i}x^{2i}-(2^{r-1}-1)x\prod\limits_{k=0}^{r-2}\sum\limits_{i=0}^{2^k}\binom{2^k}{2i}x^{2i}\right)
		\label{eq: Jian_song}
	\end{equation}
\end{thm}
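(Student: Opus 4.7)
The plan is to establish the closed-form generating function by Fourier inversion on the elementary abelian $2$-group $\F_2^r$, and then identify the resulting expression with \eqref{eq: Jian_song} via a telescoping algebraic identity.

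First step: For any $S \subseteq \F_2^r$, orthogonality of additive characters gives
\[
\mathbf{1}\Bigl[\sum_{w \in S} w = 0\Bigr] \;=\; \frac{1}{2^r} \sum_{v \in \F_2^r} (-1)^{v \cdot \sum_{w \in S} w}.
\]
Weighting each subset by $x^{|S|}$ and swapping the order of summation turns the sum over $S$ into an independent product over $w \in \F_2^r$:
\[
F_r(x) \;:=\; \sum_i a_i x^i \;=\; \frac{1}{2^r} \sum_{v \in \F_2^r} \prod_{w \in \F_2^r} \bigl(1 + (-1)^{v \cdot w} x\bigr).
\]
For $v = 0$ the inner product is $(1+x)^{2^r}$; for each of the $2^r - 1$ nonzero $v$, the map $w \mapsto v \cdot w$ is a nonzero linear form, so exactly $2^{r-1}$ of the $w$ give factor $1+x$ and $2^{r-1}$ give factor $1-x$, contributing $(1-x^2)^{2^{r-1}}$. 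Hence
\[
F_r(x) \;=\; \frac{1}{2^r}\Bigl[(1+x)^{2^r} + (2^r - 1)(1 - x^2)^{2^{r-1}}\Bigr].
\]

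Second step: Match this to \eqref{eq: Jian_song}. The even-binomial identity $\sum_{i} \binom{n}{2i} x^{2i} = \tfrac{1}{2}[(1+x)^n + (1-x)^n]$ rewrites each factor in the inner product of \eqref{eq: Jian_song} as $\tfrac{1}{2}[(1+x)^{2^k} + (1-x)^{2^k}]$. The key algebraic fact is the telescoping identity
\[
\prod_{k=0}^{r-2}\Bigl[(1+x)^{2^k} + (1-x)^{2^k}\Bigr] \;=\; \frac{(1+x)^{2^{r-1}} - (1-x)^{2^{r-1}}}{2x},
\]
which I would prove by induction on $r$ using $(a-b)(a+b) = a^2 - b^2$ with $a = (1+x)^{2^k}$, $b = (1-x)^{2^k}$, the base case $r=2$ being $(1+x)-(1-x) = 2x$. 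Writing $A = (1+x)^{2^{r-1}}$ and $B = (1-x)^{2^{r-1}}$, the formula \eqref{eq: Jian_song} collapses to
\[
A\left[\frac{A+B}{2} - \frac{2^{r-1}-1}{2^r}(A-B)\right] \;=\; \frac{A}{2^r}\bigl[A + (2^r-1)B\bigr] \;=\; \frac{(1+x)^{2^r} + (2^r-1)(1-x^2)^{2^{r-1}}}{2^r},
\]
which agrees with $F_r(x)$.

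The main obstacle is spotting the telescoping product; without it, \eqref{eq: Jian_song} and the character-sum closed form look quite different. A more direct recursive alternative (splitting $\F_2^r = H \oplus \F_2 e$ along a hyperplane, tracking the parity of $|S \setminus H|$, and iterating) would reproduce essentially the same identities, but the Fourier approach extracts the closed form in a single line and reduces the remaining work to a short algebraic matching.
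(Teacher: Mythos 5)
Your proof is correct. Note that the paper does not actually prove this statement --- it is quoted from the OEIS and attributed to Song --- so there is no in-paper argument to compare against; your derivation stands as a self-contained verification. Both halves check out: the character-sum computation correctly yields $F_r(x)=\tfrac{1}{2^r}\bigl[(1+x)^{2^r}+(2^r-1)(1-x^2)^{2^{r-1}}\bigr]$ (one can sanity-check against Table~1: e.g.\ for $r=4$ the coefficient of $x^4$ is $\tfrac{1}{16}\bigl[\binom{16}{4}+15\binom{8}{2}\bigr]=140$, and of $x^2$ is $\tfrac{1}{16}\bigl[120-120\bigr]=0$), and the telescoping identity $\prod_{k=0}^{r-2}\bigl[(1+x)^{2^k}+(1-x)^{2^k}\bigr]=\tfrac{(1+x)^{2^{r-1}}-(1-x)^{2^{r-1}}}{2x}$ follows exactly as you say from repeated application of $(a-b)(a+b)=a^2-b^2$, after which the algebra collapsing \eqref{eq: Jian_song} to $\tfrac{A}{2^r}\bigl[A+(2^r-1)B\bigr]$ is routine and correct. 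The only point worth making explicit is the normalization: each factor $\sum_i\binom{2^k}{2i}x^{2i}$ contributes a factor of $\tfrac12$, so the product of the $r-1$ factors equals $\tfrac{1}{2^{r-1}}\cdot\tfrac{A-B}{2x}=\tfrac{A-B}{2^r x}$, and the prefactor $x$ in \eqref{eq: Jian_song} cancels the denominator --- you have this right, but it is the one place where a dropped power of $2$ would silently break the match.
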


In the case $r=4$ we have:
\begin{align}\label{eq: Jian_song_n_4}
&(x+1)^8\left(\sum\limits_{i=0}^8\binom{8}{2i}x^{2i}-7x\prod\limits_{k=0}^2\sum\limits_{i=0}^{2^k}\binom{2^k}{2i}x^{2i}\right)
\\\notag
&=x^{16}+x^{15}+35x^{13}+140x^{12}+273x^{11}+448x^{10}+715x^9\\
&\phantom{{}=x^{16}}+870x^8+715x^7+448x^6+273x^5+140x^4+35x^3+x+1
  \label{eq: poly_expanded}
\end{align}

\begin{table}[H]
  \centering
\caption{Values of $\gamma_i$ }
	\begin{tabular}{|c|c|c|c|c|c|c|c|c|c|c|}
		\hline
		$i$&0&1&2&3&4&5&6&7\\
		\hline 
		$\gamma_i$&1&1&0&35&140&273&448&715\\
          \hline
                \hline
          8&9&10&11&12&13&14&15&16\\
          \hline
          870&715&448&273&140&35&0&1&1\\
          \hline
	\end{tabular}
	\label{gammai}
      \end{table}

\begin{rmk}
There is a symmetry $\gamma_i=\gamma_{16-i}$ because the sum of all elements in $\F_2^4$ is $0$: a choice of $i$ elements that sum to $0$ partitions $\F_2^4$ into two sets that sum to $0$.
\end{rmk}

Now we use the formula \eqref{eq: final_count_formula} and the values in Table~\ref{gammai} to give
values of $N_m^{n+1}$, i.e., 
numbers of components of each dimension of the fixed locus
of a symplectic involution acting on a $2n$-dimensional variety of Kummer type. We show these values up to $n=10$. 

We give the values in two tables, one for each parity of $n$, since, if $n$ is even (odd), only the $\gamma_i$ with $i$ odd (even)
contribute to $N_m^{n+1}$.
The components of the fixed locus of a symplectic involution 
on a $2n$-dimesional variety of Kummer type
have dimensions $2m$
for $\max\{0, \frac{n+1}{2} - 24\} \leq m \leq \frac{n+1}{2}$ if $n$ is odd, and $\max\{0, \frac{n}{2} - 22\} \leq m \leq \frac{n}{2}$ if $n$ is even since all the binomial coefficients in the formula \eqref{eq: final_count_formula}
will be $0$ outside these bounds. We leave the entries of the tables blank for values of $m$ not within these bounds.

\begin{table}[H]
	\centering
	\caption{Values of $N_m^{n+1}$, for $n$ odd}
	\begin{tabular}{|c|c|ccccccc|}
	\hline 
	&$n$ &&&&&&&\\
		\hline 
$m$:&&$0$&$1$&$2$&$3$&$4$&$5$&$\cdots$\\
		\hline 
        &$1$ 
             &$0$&$1$&&&&&\\
        & $3$ 
             &$140$&$0$&$1$&&&&\\
        &$5$
             &$1008$&$140$&$0$&$1$&&&\\
        &$7$
             &$4398$&$1008$&$140$&$0$&$1$&&\\
        &$9$
             &$14688$&$4398$&$1008$&$140$&$0$&$1$&\\
		&\vdots&\vdots&\vdots&\vdots&\vdots&\vdots&\vdots&$\ddots$\\
		\hline
	\end{tabular}
	\label{tab: even_n}
\end{table}

\begin{table}[H]
	\centering
\caption{Values of $N_m^{n+1}$, for $n$ even}
	\begin{tabular}{|c|c|ccccccc|}
		\hline
		&$n$&&&&&&&\\
		\hline
$m$:&&$0$&$1$&$2$&$3$&$4$&$5$&$\cdots$\\
		\hline 
		&$2$ 
                    &$36$&$1$&&&&&\\
		&$4$
                    &$378$&$36$&$1$&&&&\\
		&$6$
                    &$2185$&$378$&$36$&$1$&&&\\
		&$8$
                    &$8485$&$2185$&$378$&$36$&$1$&&\\
		&$10$
                    &$24453$&$8485$&$2185$&$378$&$36$&$1$&\\
		&\vdots&\vdots&\vdots&\vdots&\vdots&\vdots&\vdots&$\ddots$\\
		\hline
	\end{tabular}
	\label{tab: odd_n}
      \end{table}

For example, $N_0^{n+1}$ is the number of isolated fixed points on a $2n$-fold; $n=47$ is the largest value for which $N_0^{n+1}$ is nonzero. The formula
\eqref{eq: final_count_formula} in this case gives the following:
\[
  N_0^{n+1}=\gamma_{n+1}\binom{n+1}{0}+\gamma_{n-1}\binom{n-1}{1}+
\gamma_{n-3}\binom{n-3}{2}+\cdots,
\]
where $\gamma_{n+1}$ is the number of ways to choose $n+1$ distinct points in $\F_2^4$ that sum to $0$.
For any $n$, the component of the fixed locus with the largest dimension is deformation equivalent to one copy of $A^{[\lceil\frac{n}{2}\rceil]}$.
In the case of $[-1_A]$ acting on $K_nA$, $\gamma_{n+1}$ is counting the number of fixed points supported on $n+1$ distinct points of $A[2]$.
When $n=1$, the entire K3 surface
$K_1A$ is itself fixed. 
When $n=2$, the fixed locus of
$K_2A$ consists of $1$ fixed K3 surface as well as 
$36$ isolated fixed points, where 
$\gamma_3=35$ of those fixed points are supported on three distinct points in $A[2]$.

\section{Theta characteristics on
  abelian surfaces with odd degree polarizations}
\label{group}

In this section, we introduce theta characteristics and Hudson tables.

We begin by giving a brief review of the classical $(16,6)$ configuration of planes and points on the singular Kummer K3 surface associated to a principally polarized abelian surface. In particular, we examine an incidence table of the points and planes. We then show in \S\ref{label} that this table
captures information about theta characteristics for any abelian surface with
a polarization
of odd degree, and use this combinatorial tool to show identities among the quadratic forms.

In \S\ref{base} we review the relationship between theta characteristics
and the base locus of
the linear system of the polarization on $A$.

\subsection{The principally polarized case}

In \cite[Ch.~1 \S3]{Hudson},
Hudson describes a $(16,6)$ configuration in the singular Kummer K3 surface $A/[-1]$
associated to a principally polarized abelian surface $A$:
there are $16$ singular points, coming from $A[2]$,
which are in bijection with $16$ planes.
Each plane contains exactly six of the singular points and each point lies on six planes. One such plane is shown in Figure~\ref{fig: singular kummer}.

\begin{figure}[h]\label{fig: singular kummer}
\includegraphics[width=0.45\textwidth]{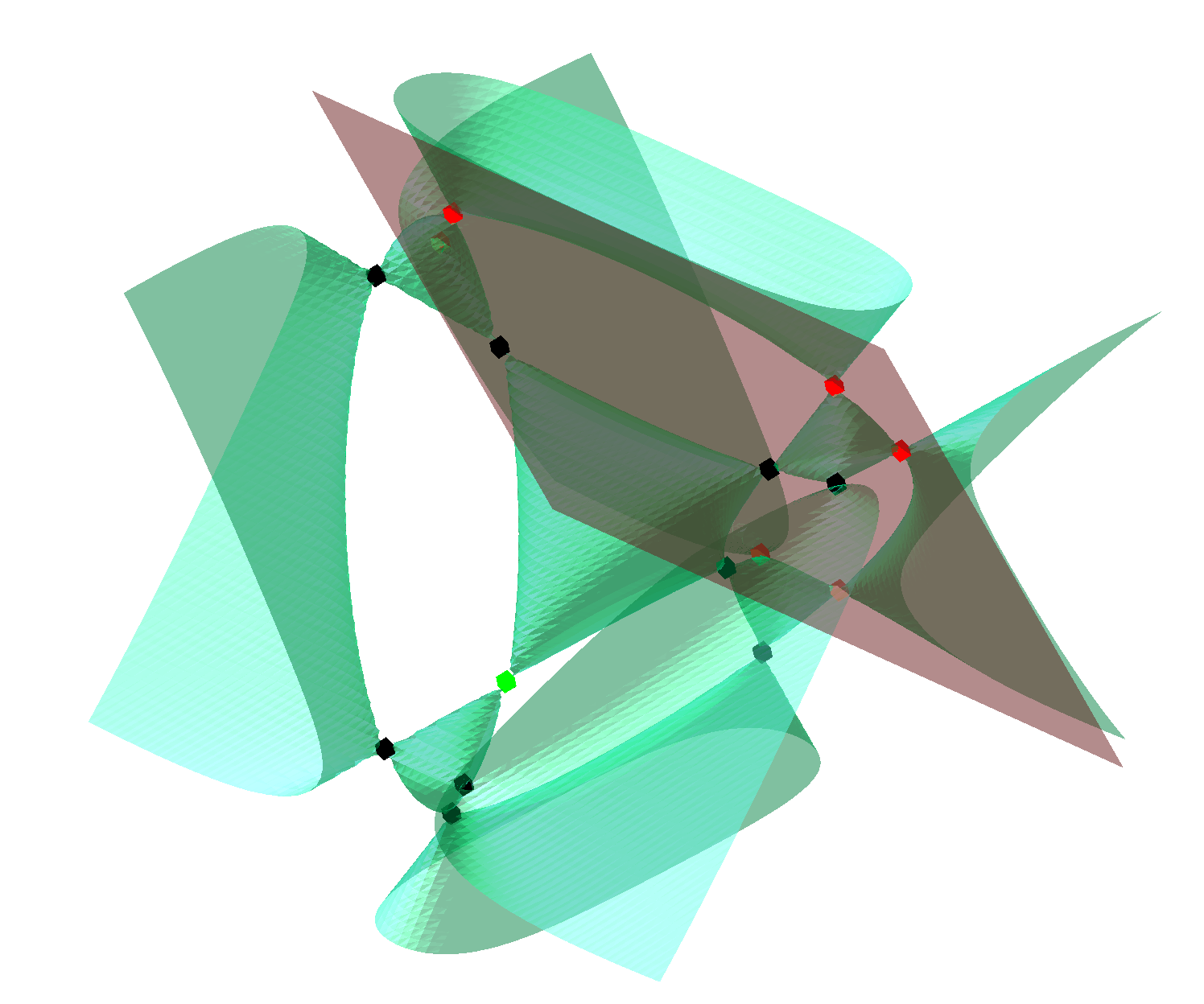}\quad\quad
\includegraphics[width=0.45\textwidth]{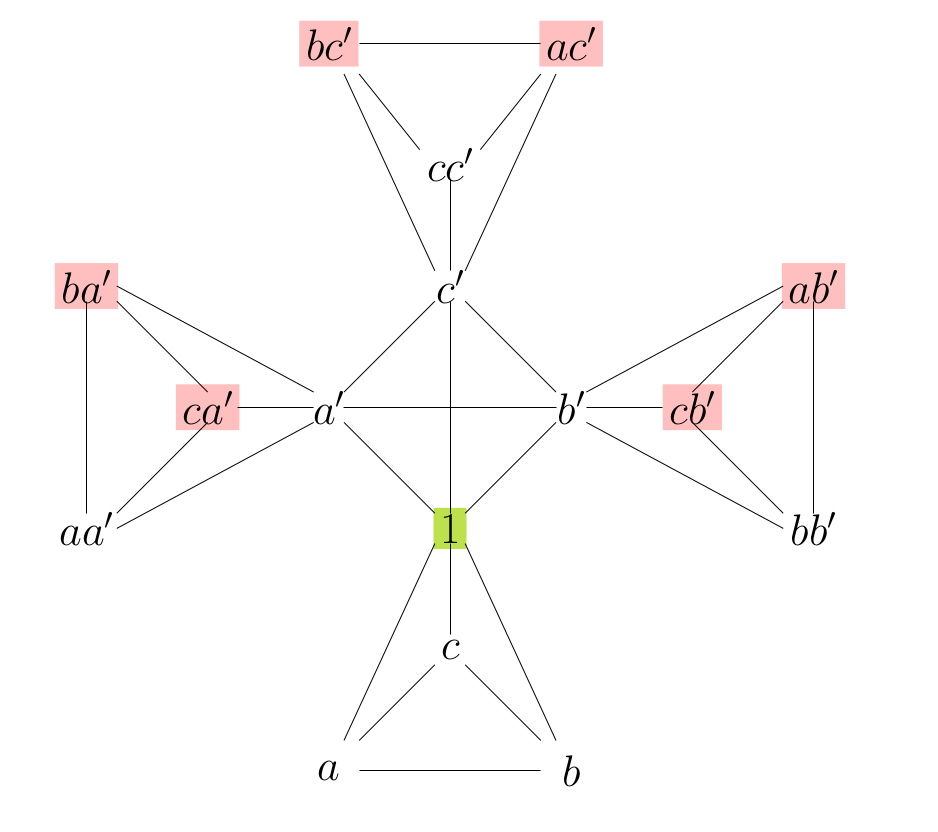}  
\caption{On the left is a
  singular Kummer surface in green  with a plane in red.
The $16$ singular points are marked. The point in green is dual to the plane. The six points intersecting the plane are red.
On the right we show a simplified diagram of this surface
with the singular points labelled so the reader can see them more clearly.}
\end{figure}

Hudson names a (non-minimal) set of generators for the group $A[2]$: $1,a,b,c,a',b',c'$. They obey the following multiplication tables:
\[
    \begin{tabular}{>{$}l<{$}|*{6}{>{$}l<{$}}}
    ~ &  a  & b & c  \\
    \hline\vrule height 12pt width 0pt
    a &  1 & c   & b \\
    b &     & 1   & a \\
    c &    &   & 1 \\
    \end{tabular}
\quad\quad\quad
    \begin{tabular}{>{$}l<{$}|*{6}{>{$}l<{$}}}
    ~ &  a'  & b' & c'  \\
    \hline\vrule height 12pt width 0pt
    a' &  1 & c'   & b' \\
    b' &     & 1   & a' \\
    c' &    &   & 1 \\
    \end{tabular}
\]
The table in Figure~\ref{mult}
contains all of the points of $A[2]$
and is also a multiplication table.
We call  it the \emph{Hudson table} of the $(16,6)$ configuration.
The Hudson table records the information of the  $(16,6)$ configuration: 
If we fix a plane and consider the corresponding point in the Hudson table, the six points lying on that plane are precisely those lying in either the same row or the same column. Similarly, if we fix a point of $A[2]$ and consider its entry in the Hudson table, the values lying
in either the same row or the same column correspond to the six planes on which that point lies.

\begin{figure}[H]
\begin{tabular}{c|ccc} 
$1$ & $ab'$ & $bc'$ & $ca'$\\
\hline\vrule height 12pt width 0pt
$ac'$ & $a'$ & $c$ & $bb'$\\ 
 $ba'$ &$cc'$  & $b'$ & $a$\\ 
 $cb'$ & $b$ & $aa'$ &$c'$
\end{tabular}
\caption{The Hudson table}
\label{mult}
\end{figure}

For example, the plane corresponding to the point $1$ contains the points $ac',ba',cb',ab',bc',ca'$, and the point $1$ lies on the planes corresponding to $ac',ba',cb',ab',bc',ca'$.

\subsection{$(1,d)$-polarized surfaces with $d$ odd}\label{label}
Let $L$ be a (symmetric) $(1,d)$-polarization of $A$  
with $d$ odd.
We can generalize the incidence structure of points and planes described in the previous section using theta characteristics on $A[2]$. See Gross and Harris \cite[\S1]{GrossHarris} and Farkas \cite[\S1,2]{Farkas} for modern introductions to the subject. 

Since $d$ is odd, the polarization
$\phi_L:A\to \hat{A}$ restricts to 
an isomorphism $A[2]\cong \hat{A}[2]$ and hence the Weil pairing gives a non-degenerate strictly alternating form $\langle\, ,\,\rangle$ on $A[2]$ as an $\F_2$-vector space.
The quadratic forms associated to this pairing on $A[2]$ are the
\textit{theta characteristics} of our polarized abelian surface (cf.~\cite[\S1]{BolognesiMassarenti}), which form
a principal homogeneous space over $A[2]$.

From the general theory of such pairings, it is possible to decompose
$A[2]$ into a sum of two maximal isotropic vector spaces $A[2]\cong X\oplus Y$.
Given such a decomposition, any elements both in $X$ or both in $Y$ pair to $0$, and
the function $q:A[2]\to \F_2$ defined by
$q(x\cdot y)=\langle x,y\rangle$ for  $x\in X, y\in Y$ is an even quadratic form. It is possible to choose dual symplectic bases $\{x_i\}_i,\{y_i\}_i$ for $X$ and~$Y$, so that $\langle x_i,y_j\rangle=\delta_{ij}$.

If we label $X=\{a,b\}$ and $Y=\{b',a'\}$ and call the associated quadratic form $q_1$,
we find that the theta characteristics have a pleasant visual description in terms of the Hudson table of \Cref{mult}, which we now explain.
We partition the points of $A[2]$ into two sets depending on the values $q_1$ takes on them:
\begin{align}\label{even}
  10_1&:=\{v\in A[2]\mid q_1(v)=0\} =\{1,a,b,c,a',b',c',aa',bb',cc'\} \\
  6_1&:=\{v\in A[2]\mid q_1(v)=1\}=\{ab',ac',ba',bc',ca',cb'\}
  \label{odd}
\end{align}
We have chosen notation for these subsets of $A[2]$ using 
the number of elements in the set with a subscript matching $q_1$.
The elements of $6_1$ are the entries of the Hudson table
that are in the same row or column as $1$.

Now that we have chosen $q_1$ to correspond to $1\in A[2]$, the structure of the theta characteristics as a principal homogeneous space of $A[2]$ determines a theta characteristic $q_v$ for each $v\in A[2]$:
\begin{align}\label{qv}
  q_v:A[2]&\to \F_2\\\notag
  u&\mapsto q_1(u)+\langle v,u\rangle
\end{align}
If $v\in 10_1$, $q_v$ has Arf invariant $0$ and thus is even,
and if $v\in 6_1$, $q_v$ has Arf invariant~$1$ and is odd.
For each $v\in A[2]$, $q_v$ partitions the points of $A[2]$ into the following sets, depending on whether $q_v$ is even or odd:
\begin{align}
\text{$q_v$ even: }  10_v&:=\{v\in A[2]\mid q_1(v)=0 \}, \quad 6_v:=\{v\in A[2]\mid q_1(v)=1\}\\
\text{$q_v$ odd: }  10_v&:=\{v\in A[2]\mid q_1(v)=1 \}, \quad  6_v:=\{v\in A[2]\mid q_1(v)=0\}
\end{align}
Regardless of the choice of $v$, the elements in $6_v$ are precisely the points of $A[2]$, other than $v$ itself,
that are in the same row or column of the Hudson table as $v$. 
In the following table, the points in $6_{b'}$ are highlighted. The non-highlighted points are $10_{b'}$. 
\begin{equation}\label{tab: hudson's incidence diagram highlighted}
	\begin{tabular}{c|ccc}
		$1$&$ab'$&\colorbox{pink}{$bc'$}&$ca'$\\
		\hline
		$ac'$&$a'$&\colorbox{pink}{$c$}&$bb'$\\
		\colorbox{pink}{$ba'$}&\colorbox{pink}{$cc'$}&$b'$&\colorbox{pink}{$a$}\\
		$cb'$&$b$&\colorbox{pink}{$aa'$}&$c'$
	\end{tabular}
\end{equation}

The following lemmas will be useful in later sections.

\begin{lemma}\label{sixten}
For any $v\in A[2]$, $v6_v=6_1$ and $v10_v=10_1$.
\end{lemma}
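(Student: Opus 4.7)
The plan is to reduce the claim to a direct comparison between the values of $q_1$ on $vu$ and the values of $q_v$ on $u$. Since multiplication by $v$ is a bijection on $A[2]$ and $|6_v|=|6_1|=6$, it suffices to show $v6_v\subseteq 6_1$; then $v10_v=10_1$ follows either by taking complements in $A[2]=6_1\sqcup 10_1=6_v\sqcup 10_v$, or by repeating the argument below.

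The key computation uses the quadratic-form relation
\[
q_1(vu)=q_1(v)+q_1(u)+\langle v,u\rangle,
\]
which holds because $q_1$ is a quadratic refinement of $\langle\,,\,\rangle$, combined with the definition $q_v(u)=q_1(u)+\langle v,u\rangle$ from \eqref{qv}, to give
\[
q_1(vu)=q_1(v)+q_v(u).
\]
From here I split into cases according to whether $v\in 10_1$ or $v\in 6_1$. If $v\in 10_1$, then $q_1(v)=0$ and $q_v$ is even, so by definition $6_v=q_v^{-1}(1)$; the displayed identity reduces to $q_1(vu)=q_v(u)$, hence $vu\in 6_1$ if and only if $u\in 6_v$. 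If $v\in 6_1$, then $q_1(v)=1$ and $q_v$ is odd, so $6_v=q_v^{-1}(0)$; the identity becomes $q_1(vu)=1+q_v(u)$, and again $vu\in 6_1$ if and only if $u\in 6_v$.

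There is no serious obstacle: the only point to monitor is the bookkeeping convention that attaches the labels $6_v$ and $10_v$ to different level sets of $q_v$ according to its parity. This sign flip exactly compensates the additive $q_1(v)$ term in the key identity, which is what makes the conclusion parity-independent. Once the identity $q_1(vu)=q_1(v)+q_v(u)$ is in hand, both halves of the lemma fall out in one line.
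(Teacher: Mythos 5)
Your proof is correct and follows essentially the same route as the paper: both hinge on combining \eqref{qv} with the quadratic-refinement relation to obtain $q_1(vu)=q_1(v)+q_v(u)$. The only cosmetic difference is that you finish by an explicit case analysis on the parity of $q_v$, whereas the paper concludes from the fact that $q_v$ is constant on each of $6_v$ and $10_v$ (with the cardinality count left implicit); your version makes the level-set bookkeeping more explicit but is the same argument.
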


\begin{proof}
Since the sets $10_v$ and $6_v$ are
  characterized by the fact that $q_v$ is constant on each of them,
  it suffices to show that 
  for any $u_1,u_2\in A[2]$, if $q_v(u_1)=q_v(u_2)$, then $q_1(vu_1)=q_1(vu_1)$.

Using \eqref{qv} and the relationship between quadratic forms and the bilinear pairing, we have for each $i\in\{1,2\}$:
\[q_v(u_i)=q_1(u_i)+\langle v,u_i\rangle,\quad
q_1(vu_i)+q_1(v)+q_1(u_i)=\langle v,u_i\rangle.
\]
Combining these equations, we have 
$$q_v(u_i)=q_1(u_i)+q_1(vu_i)+q_1(v)+q_1(u_i)=q_1(vu_i)+q_1(v),
$$
proving the result.
\end{proof}

\begin{lemma}\label{prod}
For any distinct $u,v\in A[2]$, $u$ and $v$ share the same row or column in the Hudson table if and only if $q_1(uv)=1$ (equivalently $uv\in 6_1$). 
\end{lemma}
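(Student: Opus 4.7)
The plan is to interpret the Hudson table as the Cayley table for a product decomposition $A[2]\cong R\times C$, so that ``sharing a row or column'' becomes a coset condition on the product $uv$.

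Set $R:=\{1,ac',ba',cb'\}$ (the entries in the first column of the Hudson table) and $C:=\{1,ab',bc',ca'\}$ (the entries in the first row). The first step is to verify, using Hudson's multiplication tables for the generators $a,b,c,a',b',c'$, that $R$ and $C$ are subgroups of $A[2]$ of order $4$ with $R\cap C=\{1\}$; multiplication then gives an isomorphism $R\times C\xra{\sim}A[2]$. Next I would observe, directly from how the Hudson table was constructed, that the entry in the row labelled by $r\in R$ and the column labelled by $c\in C$ is exactly the product $rc$. Equivalently, the horizontal lines of the table are the cosets $rC$ and the vertical lines are the cosets $cR$.

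With this dictionary in place, two distinct elements $u,v\in A[2]$ lie in the same row iff $uv\in C$ and in the same column iff $uv\in R$ (using that every element of $A[2]$ is its own inverse). Hence $u$ and $v$ share a row or column iff $uv\in R\cup C$. A direct inspection of the two four-element sets gives
\[
R\cup C=\{1\}\cup 6_1,
\]
with $6_1$ as in~\eqref{odd}. Since $u\neq v$ forces $uv\neq 1$, this is equivalent to $uv\in 6_1$, which by~\eqref{odd} is the same as $q_1(uv)=1$.

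The only step needing genuine verification is the closure of $R$ and $C$ under multiplication; this is a short calculation using the relations $ab=c$, $bc=a$, $ac=b$ and their primed analogues, and constitutes the main (though very modest) obstacle.
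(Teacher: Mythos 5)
Your proof is correct, but it takes a genuinely different route from the paper's. The paper first proves Lemma~\ref{sixten} (that $v6_v=6_1$ for every $v$) by a short computation with the quadratic forms $q_v$, and then deduces Lemma~\ref{prod} in one line from the observation, stated in the text around display~\eqref{tab: hudson's incidence diagram highlighted}, that $6_v$ consists exactly of the row- and column-mates of $v$ in the Hudson table. You instead prove that combinatorial fact directly: you exhibit the Hudson table as the Cayley table of a direct-product decomposition $A[2]\cong R\times C$ with $R=\{1,ac',ba',cb'\}$ and $C=\{1,ab',bc',ca'\}$, so that ``same row'' means $uv\in C$, ``same column'' means $uv\in R$, and $(R\cup C)\setminus\{1\}=6_1$ by inspection of~\eqref{odd}. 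I checked the two closure computations (e.g.\ $ac'\cdot ba'=cb'$, $ab'\cdot bc'=ca'$) and the claim that the $(r,c)$ entry of the table is $rc$; all hold, and the coset argument for the converse directions (if $uv\in C$ then the $R$-components agree, since $R\cap C=\{1\}$) is sound. What your approach buys is self-containedness and transparency: it actually proves the row/column description of $6_v$ for all $v$ at once rather than taking it as read from the table, at the cost of a small finite verification. What the paper's approach buys is that Lemma~\ref{sixten} is reused elsewhere and ties the table directly to the quadratic forms $q_v$, which is the language the later arguments (Propositions~\ref{determinant} and~\ref{Bx}) are written in. Either proof is acceptable.
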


\begin{proof}
  The points $u$ and $v$ are in the same row or column of the Hudson table if and only if $u\in 6_v$. By Lemma~\ref{sixten}, $u\in 6_v$ if and only if
  $uv\in 6_1$. 
\end{proof}  

\begin{lemma}\label{qform}
  Let $u_1,\ldots,u_n\in A[2]$ so that $u_1\cdots u_n=1$.
  Then the sum $q_v(u_1)+\cdots+q_v(u_n)$  has the same value
for any choice of $v\in A[2]$.
\end{lemma}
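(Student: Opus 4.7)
The plan is to reduce the statement to the identity $\langle v, u_1\cdots u_n\rangle = 0$ via the explicit formula \eqref{qv} relating $q_v$ to $q_1$. This converts the claim about an arbitrary theta characteristic into a statement about the Weil pairing $\langle\,,\,\rangle$ and the fact that $u_1\cdots u_n=1$.

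First I would apply \eqref{qv} termwise to write, for each $i$,
\[
q_v(u_i) = q_1(u_i) + \langle v, u_i\rangle.
\]
Summing over $i$ and using the bilinearity of $\langle\,,\,\rangle$ (with respect to the additive $\F_2$-vector space structure on $A[2]$, which corresponds to multiplication in the group notation used in the excerpt), I would obtain
\[
\sum_{i=1}^n q_v(u_i) = \sum_{i=1}^n q_1(u_i) + \Big\langle v,\; u_1\cdots u_n\Big\rangle.
\]
Since by hypothesis $u_1\cdots u_n=1$, and since $\langle v,1\rangle=0$ for any $v$, the second term vanishes, giving
\[
\sum_{i=1}^n q_v(u_i) = \sum_{i=1}^n q_1(u_i),
\]
which is manifestly independent of $v$.

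There is no real obstacle here: the only subtlety is the translation between the multiplicative group law on $A[2]$ and the additive $\F_2$-vector space structure on which the Weil pairing is bilinear, but this is purely notational. One might mention this translation explicitly to avoid confusion, noting that the hypothesis $u_1\cdots u_n=1$ is the multiplicative expression of $u_1+\cdots+u_n=0$ in $A[2]$ viewed as an $\F_2$-vector space.
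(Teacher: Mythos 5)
Your proof is correct and follows exactly the same route as the paper's: apply \eqref{qv} termwise, sum, and use bilinearity of the Weil pairing to reduce to $\langle v, u_1\cdots u_n\rangle=\langle v,1\rangle=0$. No discrepancies.
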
  

\begin{proof} Let $v\in A[2]$.
Applying \eqref{qv}, we have
$$q_1(u_1)+\cdots+q_1(u_n)=q_v(u_1)+\cdots+q_v(u_n)+
\langle v,u_1 \rangle+\cdots+\langle v,u_n \rangle.
$$
Using bilinearity, 
$\langle v,u_1 \rangle+\cdots+\langle v,u_n \rangle
=\langle v,u_1\cdots u_n \rangle=\langle v,1 \rangle=0$, and thus
$q_1(u_1)+\cdots+q_1(u_n)=q_v(u_1)+\cdots+q_v(u_n)$.
\end{proof}

\subsection{$2$-torsion base points of a polarization}\label{base}
Let $A$ be an abelian surface with a symmetric polarization $L$
of odd degree.
The $16$ possible choices of the line bundle $L\otimes P_x$ for $x\in \hat{A}[2]$ correspond to the theta characteristics described in the previous section. 

Following \cite[\S2]{BolognesiMassarenti}, we describe
the eigenspaces of $[-1_A]$ acting on $H^0(A,L\otimes P_x)$ and
the points of $A[2]$ in the base loci of these linear systems. The eigenvalues of the action are $1,-1$.

Fix a line bundle $L\otimes P_x$. It corresponds to a theta characteristic $q_v$.
If $q_v$ is even, then the $+1$-eigenspace
$H^0(A,L\otimes P_x)^+$ is
$\frac{d+1}{2}$-dimensional with
 base locus 
$6_v\subset A[2]$, and
the $-1$-eigenspace $H^0(A,L\otimes P_x)^{-}$ is
$\frac{d-1}{2}$-dimensional with
base locus
$10_v\subset A[2]$. If $q_v$ is odd,
$H^0(A,L\otimes P_x)^-$ is
$\frac{d+1}{2}$-dimensional with base locus $6_v\subset A[2]$ 
and 
$H^0(A,L\otimes P_x)^{+}$ is
$\frac{d-1}{2}$-dimensional with base locus $10_v\subset A[2]$.

\section{Isomorphism of moduli spaces}\label{sec.iso}

In this section, we study the isomorphism
$\Psi:K_{d-1}A\xra{\sim} K_{\hat{A}}(0,\hat{l},-1)$.
In \S\ref{iso} and \S\ref{curve}, we briefly define $\Psi$ and then examine the supporting curves of $\Psi(\xi)$ for $\xi\in K_{d-1}A$.
In \S\ref{notiso}, we give criteria that allows us to find points of $\hat{A}[2]$ in these supporting curves.

\subsection{The isomorphism}\label{iso}
Let $A$ be an abelian surface that is $(1,d)$-polarized by a symmetric line bundle $L$ so that $\NS(A)=\Z l$, where $l=[L]$. Let $\hat{l}\in \NS(\hat{A})$ as in \S\ref{BN}.

By work of Gulbrandsen
\cite{Gulbrandsen} and Yoshioka \cite[Prop.~3.5]{Yoshioka}, there is an isomorphism
$\Psi:K_{d-1}A\xra{\sim} K_{\hat{A}}(0,\hat{l},-1)$, which is given by the following composition:
\begin{align}
  \Psi:  K_{d-1}A &\xra{\sim} K_A(1,0,-d) \xra{\sim} K_A(1,l,0)  \xra{\sim}
                K_{\hat{A}}(0,\hat{l},-1)
\\\notag                                                          
  \xi&\rlap{${}\mapsto \mathcal{I}_{\xi}$}
       \phantom{\xra{\sim} K_A(1,0,-d)}\!
       \llap{$I$}\rlap{${}\mapsto I\otimes L$}
       \phantom{\xra{\sim} K_A(1,l,0)}\,\,\,\,
\llap{$\fF$}{}\mapsto \Phi_P(\fF)[-1] 
\end{align}
The leftmost isomorphism sends a subscheme of length $d$ to its ideal sheaf.
Tensoring with the line bundle $L$ gives the middle isomorphism.
The morphism on the right is given by the action of the Fourier--Mukai transform $\Phi_{P_A}$ on the sheaves parametrized by $K_A(1,l,0)$. The images
under $\Phi_{P_A}$
are, \textit{a priori}, complexes of sheaves, but they are 
supported only in index $1$, so shifting the indexing by $-1$ gives a well-defined map. 

Since $L$ is symmetric, $\Psi\circ[-1_A]=[-1_{\hat{A}}]\circ\Psi$, and therefore $\Psi$ restricts to an isomorphism between the fixed loci of $[-1_A]$ and $[-1_{\hat{A}}]$. We study the bijection $\Psi$ gives on the isolated points in these fixed loci, which we call $S$ and $R$, respectively.
\begin{equation}\label{ST}
\xymatrix@R=1em@C=1em{  
  \llap{${\Psi:{}}$}S \ar[r]^{\sim} \ar@{}[d]|{\textstyle\cap}  &R \ar@{}[d]|{\textstyle\cap}\\
K_{d-1}A &K_{\hat{A}}(0,\hat{l},-1)}
\end{equation}
We will often restrict our focus to the subset $S'\subset S$
whose elements are subschemes of $A$ consisting of $d$ distinct points. 
In the notation of \S\ref{formula}, $\# S=\# R=N_0^d$ and 
$\# S'=\gamma_d$.

\subsection{The support of $\Psi(\xi)$}\label{curve}

The points of $K_{\hat{A}}(0,\hat{l},-1)$  parametrize certain rank $1$ torsion-free sheaves (see \S\ref{BN}) supported on curves in the linear system $|\smash{\hat{L}}|$, where $\smash{\hat{L}}\in \smash{\Pic^{\hat{l}}(\hat{A})}$ is a $(1,d)$-polarization on $\smash{\hat{A}}$.
In this section, we follow the approach of \cite[3.1]{Gulbrandsen} (cf.~\cite[11.3]{Polishchukbook})
to describe the support of $\Psi(\xi)$ for $\xi\in K_{d-1}A$.

For any 
$\xi\in K_{d-1}A$
and $x\in \hat{A}$, the fiber of $\Psi(\xi)$ over $x$ is the following:
\[
\Psi(\xi)\otimes k(x)\cong H^1(A,\I_\xi\otimes L\otimes P_x),
\]
where $P_x$ is the line bundle on $A$ corresponding to $x$.
The set of points in the curve supporting  $\Psi(\xi)$, 
which we call $D_{\Psi(\xi)}$,
is therefore the following: 
\begin{equation}
\{x\in\hat{A}\mid H^1(A,\I_{\xi}\otimes L\otimes P_x)\neq 0\}.
\end{equation}
To better understand
the vanishing of 
$H^1(A,\I_\xi\otimes L\otimes P_x)$, we consider the short exact sequence
\[
  0\to \I_{\xi}\to \O_A\to
\O_{\xi}\to 0,
\]
where $\O_{\xi}$ is the structure sheaf associated to $\xi$.
When $\xi$ consists of $d$ distinct points, $\O_{\xi}$ is the direct sum of $d$ skyscraper sheaves.

After tensoring with $L\otimes P_x$ we have
\[
  0\to \I_{\xi}\otimes L\otimes P_x\to L\otimes P_x\to
\O_\xi\otimes L\otimes P_x \to 0.
\]
By Mumford's index theorem, 
$H^1(A,L\otimes P_x)=0$. Therefore,
applying the global sections functor gives the following
long exact sequence:
\begin{align}\notag
  0\to H^0(A,\I_{\xi}\otimes L\otimes P_x)
  \to H^0(A,L\otimes P_x)
  \xra{\hypertarget{star}{(\star)}}
  H^0(A,\O_{\xi}\otimes L\otimes P_x)
\\\label{les}
  {}\to
  H^1(A,\I_{\xi}\otimes L\otimes P_x)\to 0.
\end{align}
This long exact sequence gives us a new way to characterize $D_{\Psi(\xi)}$.
The cohomology groups $H^1(\I_{\xi}\otimes L\otimes P_x)$ and $H^0(\I_{\xi}\otimes L\otimes P_x)$ have the same dimension, and
$H^0(\I_{\xi}\otimes L\otimes P_x)$ is nonzero exactly when $\xi$ lies on a curve in the linear system $|L\otimes P_x|$. Equivalently, $x\in D_{\Psi(\xi)}$ if and only if \hyperlink{star}{$(\star)$} is not an isomorphism.

\subsection{Criteria for $(\star)$ to not be an isomorphism}\label{notiso}

In this section,
we assume that
$d$ is odd, $x\in \hat{A}[2]$ and $\xi\in S'\subset K_{d-1}A$.
We will give criteria under which \hyperlink{star}{$(\star)$} is not an isomorphism 
by comparing the action of $[-1_A]^*$ on its domain and codomain.

For any $\xi\in S'$ and $q_v$ a theta characteristic of our polarized abelian surface, we may define the following value:
\begin{equation}
q(\xi):=\textstyle\sum_{u\in \xi} q_v(u)\in \F_2.
\end{equation}
By Lemma~\ref{qform}, $q(\xi)$ is independent of the choice of $q_v$. 

In the table below, we give some numbers of
points $\xi\in S'$ having each value of $q(\xi)$: 
\begin{equation}\label{zero_one}
\begin{tabular}{c|c|c|c}
  $d$&$\gamma_d$&$\#q(\xi)=0$&
                 $\#q(\xi)=1$\\\hline
	$3$&35&15&20\\
	$5$&273&141&132\\
  $7$&715&355&360\\
  $9$&715&355&360\\
  $11$&273&141&132\\
  $13$&35&15&20\\
\end{tabular}  
\end{equation}

In light of \S\ref{base}, the following result allows us to determine which elements $\xi\in S'$ have supporting curves $D_{\Psi(\xi)}$ in each eigenspace of $|\hat{L}|$.

\begin{prop}\label{determinant}
Let $\xi\in S'\subset K_{d-1}A$. 

Let $d\equiv 1\mod 4$.
  If $q(\xi)=0$ then $6_1\subset D_{\Psi(\xi)}$ and
  if $q(\xi)=1$ then $10_1\subset D_{\Psi(\xi)}$.

  Let   $d\equiv 3\mod 4$.
  If $q(\xi)=0$ then $10_1\subset D_{\Psi(\xi)}$ and
  if $q(\xi)=1$ then $6_1\subset D_{\Psi(\xi)}$.  
\end{prop}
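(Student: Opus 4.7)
The plan is to apply the criterion of \S\ref{notiso}: for $x\in\hat{A}[2]$, the membership $x\in D_{\Psi(\xi)}$ is equivalent to the evaluation map
$$(\star)\colon H^0(A,L\otimes P_x)\longrightarrow H^0(A,\O_\xi\otimes L\otimes P_x)$$
failing to be an isomorphism. Both spaces have dimension $d$: the domain by Riemann--Roch for the $(1,d)$-polarization $L\otimes P_x$, and the codomain because $\xi\in S'$ is a disjoint union of $d$ reduced points. Since $x\in\hat{A}[2]$ makes $L\otimes P_x$ symmetric and $\xi$ is $[-1_A]$-invariant, $(\star)$ is $[-1_A]^*$-equivariant, so it is an isomorphism only if the $+1$- and $-1$-eigenspace dimensions agree on both sides.

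I would then assemble the dimensions of the eigenspaces. Let $q_v$ be the theta characteristic associated to $L\otimes P_x$, so that $v\in A[2]$ is identified with $x$ under $\phi_L$. The domain is controlled by \S\ref{base}: the $+$-eigenspace has dimension $\frac{d+1}{2}$ and the $-$-eigenspace has dimension $\frac{d-1}{2}$ when $q_v$ is even, with the two dimensions swapped when $q_v$ is odd. For the codomain, since $\xi\subset A[2]$ is reduced,
$$H^0(A,\O_\xi\otimes L\otimes P_x)=\bigoplus_{u\in\xi}(L\otimes P_x)_u,$$
and $[-1_A]^*$ acts on $(L\otimes P_x)_u$ as the scalar $(-1)^{q_v(u)}$. (This is the standard description of the theta characteristic; it can also be read off \S\ref{base} at the level of stalks, since a section in $H^0(L\otimes P_x)^+$ must vanish at any $u$ where the fiber sign is $-1$, matching the base loci $6_v$ and $10_v$.) Thus
\begin{align*}
\dim H^0(\O_\xi\otimes L\otimes P_x)^+ &= \#\{u\in\xi : q_v(u)=0\},\\
\dim H^0(\O_\xi\otimes L\otimes P_x)^- &= \#\{u\in\xi : q_v(u)=1\}.
\end{align*}

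Setting $k:=\#\{u\in\xi : q_v(u)=1\}$, so that $q(\xi)\equiv k\pmod{2}$, the dimension-matching condition necessary for $(\star)$ to be an isomorphism becomes $k=\frac{d-1}{2}$ when $q_v$ is even and $k=\frac{d+1}{2}$ when $q_v$ is odd; reducing mod~$2$, these read $q(\xi)\equiv\frac{d-1}{2}\pmod{2}$ in the even case and $q(\xi)\equiv\frac{d+1}{2}\pmod{2}$ in the odd case, and whenever either condition fails we conclude $x\in D_{\Psi(\xi)}$. Splitting on $d\bmod 4$: when $d\equiv 1\pmod{4}$, $\frac{d-1}{2}$ is even and $\frac{d+1}{2}$ is odd, so the condition fails for $q_v$ odd (equivalently $v\in 6_1$ by \S\ref{label}) exactly when $q(\xi)=0$, and for $q_v$ even ($v\in 10_1$) exactly when $q(\xi)=1$; the case $d\equiv 3\pmod{4}$ is entirely parallel with the two parities swapped. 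Identifying these $v\in A[2]$ with their images in $\hat{A}[2]$ under $\phi_L$ yields the four inclusions in the proposition.

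The most delicate point is pinning down the sign $(-1)^{q_v(u)}$ of $[-1_A]^*$ on the fiber $(L\otimes P_x)_u$; this fact is classical, but it is worth cross-checking against the base-locus data of \S\ref{base} so that the $\pm$-eigenspace conventions line up consistently between the domain and codomain of $(\star)$. Once those signs are fixed, every remaining step is routine parity bookkeeping in $d$ and in $q(\xi)$.
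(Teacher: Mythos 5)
Your proposal is correct and takes essentially the same approach as the paper: both detect membership $x\in D_{\Psi(\xi)}$ by showing the $[-1_A]^*$-equivariant map $(\star)$ cannot be an isomorphism, using the eigenspace data of \S\ref{base} on the domain and the fiber signs $(-1)^{q_v(u)}$ on the codomain. The only cosmetic difference is that the paper packages the parity comparison as an equality of determinants $(-1)^{\dim(\cdot)^-}$ versus $(-1)^{q(\xi)}$, whereas you compare eigenspace dimensions reduced mod $2$ --- the identical condition.
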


\begin{proof}
We first treat the case where $d\equiv 3\mod 4$. Then $\frac{d+1}{2}$ and $\frac{d-1}{2}$ are even and odd, respectively.

From the discussion in \S\ref{base}, we have that if 
$L\otimes P_x$ is even, the determinant of the action of $[-1_A]^*$ on  $H^0(L\otimes P_x)$ is $-1$. If $L\otimes P_x$ is odd, the determinant of the action of $[-1_A]^*$ on  $H^0(L\otimes P_x)$ is $1$.

The line bundle $L\otimes P_x$ corresponds to a quadratic form $q_v$. For any $z\in A[2]$, the action of $[-1_A]^*$ on the skyscraper $k(z)\otimes L\otimes P_x$ is $(-1)^{q_v(z)}$. The multiplicities of the eigenvalues $1$ and $-1$ of the action of $[-1_A]^*$ on $H^0(A,\O_\xi\otimes L\otimes P_x)$ are
$\#\{i\mid q_v(z_i)=0\}$ and $\#\{i\mid q_v(z_i)=1\}$, respectively.
Thus the determinant of the action of $[-1_A]^*$ on $H^0(A,\O_\xi\otimes L\otimes P_x)$ is given by $(-1)^{q(\xi)}$, which happens to be independent of the choice of $x\in \hat{A}[2]$.

If the determinants of $[-1_A]^*$ acting on $H^0(A,L\otimes P_x)$ and on $H^0(\O_\xi\otimes L\otimes P_x)$ differ, then \hyperlink{star}{$(\star)$} is not an isomorphism. These determinants must differ if
 $q(\xi)=0$ and $L\otimes P_x$ is even or if
 $q(\xi)=1$ and $L\otimes P_x$ is odd, hence our result.

 When  $d\equiv 1\mod 4$, $\frac{d+1}{2}$ and $\frac{d-1}{2}$ are odd and even. The same line of reasoning as above then gives the result.
\end{proof}

\begin{rmk}
  In the case where $d=3$, if $\xi=(u,v,w)\in S'$, then
  $q(\xi)$ is $0$ or $1$ if and only if $q_u,q_v,q_w$ is a syzygetic or azygetic triple, respectively (see \cite[Def.~1.4]{Farkas}).
\end{rmk}  

\begin{rmk}
The proof of Proposition~\ref{determinant} uses the assumption that $\xi$ consists of $d$ distinct points in computing the action
  of $[-1_A]$ on $H^0(\O_{\xi}\otimes L\otimes P_x)$.
When $\xi$ has fewer supporting points,
 this statement cannot be generalized by
simply adding a multiplicity in the formula
 $q(\xi)$ according to length:
If $\xi$ consists of the point $1$ and a length $2$ subscheme of $A$ supported at some $a\in A[2]$, then 
such a formula would give $q(\xi)=0$. However, points $\xi$ of this description are contained in a Kummer K3 surface 
(\cite{HasTsc}, cf.~\S\ref{formula} p.~8) whose image under $\Psi$ is fibered over the eigenspace of $|L|$ with base locus $6_1$
 (cf.~\cite{Frei_Honigs}). 
\end{rmk}

\begin{rmk}\label{singular}
  If the kernel of \hyperlink{star}{$(\star)$} is $1$-dimensional, then $x$ is a smooth point on the curve $D_{\Psi(\xi)}$, but if the kernel has a higher dimension, $x$ cannot be a smooth point.
Under the criteria in Proposition~\ref{determinant}, the dimension of the kernel of \hyperlink{star}{$(\star)$} is odd,
 and so the supporting points may be smooth. 
However, in cases where \hyperlink{star}{$(\star)$} is not an isomorphism but the domain and codomain have equal determinants, the supporting point $x$ must be singular in $D_{\Psi(\xi)}$.
\end{rmk}
  
\section{The fourfolds case}\label{fourfolds}

In this section, we fix the following data: Let $A$ be an abelian surface that is $(1,3)$-polarized by a symmetric line bundle $L$ so that $\NS(A)=\Z l$, where $l=[L]$. We assume that $L$ is associated to an even theta characteristic $q_1$.

We examine the bijection \eqref{ST} in this case.  We first review the elements of the sets $S$ and $R$ and then apply the results of Section~\ref{sec.iso}, which allow us to deduce which curve in $|L|$ supports $\Psi(\xi)$ for each $\xi\in S$.

\subsection{The set $S$}\label{isok2} 

As summarized in \S\ref{intro}, the elements of $S$
consist of one
subscheme of $A$ whose support is the identity and $35$ subschemes consisting of
three distinct points in $A[2]$ that sum to the identity. Using the notation for $A[2]$ established in \S\ref{group},
we list the $35$ points in the first and third columns of Table~\ref{correspond} according to the  value of $q(\xi)$.

\subsection{The set $R$}\label{isokl}

We defined $R$ to be a subset of $K_{\hat{A}}(0,\hat{l},-1)$.
However, the choice of
$L$ allows us to identify the points of $A[2]$ on curves in $|L|$ with the points of $\hat{A}[2]$ on curves in $|\hat{L}|$. So, for notational simplicity we consider the fixed locus of $[-1_A]$ acting on moduli spaces of sheaves on $A$ instead.
We summarize the results of \cite{Frei_Honigs} on the isolated fixed points here.

The points of $K_A(0,l,s)$
that are fixed by $[-1_A]$ correspond to sheaves supported on curves in the eigenspaces of $|L|$ under the action of $[-1_A]$.
As described in \S\ref{base}, the eigenspaces of $H^0(A,L)$ are $1$ and $2$-dimensional, and these linear eigensystems consist of a single hyperelliptic genus $4$ curve $|L|^-=C$ and a pencil $|L|^+$ of non-hyperelliptic curves of arithmetic genus~$4$.
We may choose $L$ so that the intersection of $C$ with $A[2]$ is $10_1$ and the base locus of $|L|^+$ is $6_1$.
For each point $x\in 10_1$, there is a curve $B_x$ in $|L|^+$ that has a nodal singularity at $x$ \cite{Naruki}.

The supporting curves of sheaves in the fixed locus of $[-1_A]$ acting on 
$K_A(0,l,s)$ will be the same for any value of $s$. We 
reduce to analyzing $K_A(0,l,1)$ since
since the Abel--Jacobi map is surjective and generically one-to-one in degree $4=s+3$.
We may recover sheaves of $K_A(0,l,-1)$  from those of
$K_A(0,l,1)$ by subtracting $2\cdot 1$ from each divisor.

The set $R$ contains $16$ sheaves whose support is $C$.
$15$ of them correspond to divisors on $C$ consisting of
 four distinct points  of $10_1$ that sum to the identity, which we list in the second column of Table~\ref{correspond}. The last may be (nonuniquely) represented with the divisor~$4\cdot 1$. 

The remaining $20$ sheaves in $R$ are supported on the nodal curves $B_x\in |L|^+$. For each $x\in 10_1$, there are 
two ways to choose three points in $6_1$ that sum to $x$ in the group law of $A$. The two generalized divisors on $B_x$ defined by these choices of points
correspond to two sheaves in $R$. For instance in the case $x=1$, we have the following two generalized divisors:
\[
1+ab'+bc'+ca' \quad\text{and}\quad 1+ac'+ba'+cb'.
\]
The generalized divisors on $B_x$ are listed in the fourth column of Table~\ref{correspond}.

\begin{table}
  \centering
\caption{}
\label{correspond}
\begin{tabular}{|p{2.6cm}|p{2.5cm}||p{2.6cm}|p{2.8cm}|}
\hline
$\xi\in S$, $q(\xi)=0$, $D_{\Psi(\xi)}=C$ &
$r\in R$, $D_r=C$&
$\xi\in S$, $q(\xi)=1$, $D_{\Psi(\xi)}=B_x$&
$r\in R$, $D_r=B_x$
  \\\hline
               &  &$(ac',ba',cb')$&$1+ac'+ba'+cb'$\\ 
  &&$(ba',cc',ab')$ &$1+ba'+cc'+ab'$\\ 
$(a,a',aa')$&$1+a+a'+aa'$&&\\ 
$(b,b',bb')$&$1+b+b'+bb'$&$(a,b',ab')$ &$cc'+ca'+ba'+bc'$\\ 
$(c,c',cc')$&$1+c+c'+cc'$&$(a',b,ba')$ &$cc'+cb'+ab'+ac'$\\ 
$(a,b,c)$&$1+a+b+c$&$(a,c',ac')$ &$bb'+ba'+ca'+cb'$\\ 
$(a',b',c')$&$1+a'+b'+c'$&$(a',c,ca')$&$bb'+bc'+ab'+ac'$\\ 
$(aa',bb',cc')$&$1+aa'+bb'+cc'$&$(b,c',bc')$&$aa'+ab'+ca'+cb'$\\ 
                             &&$(b',c,cb')$ &$aa'+ac'+bc'+ba'$\\
$(c',ca',cb')$&$a'+b'+c+cc'$&$(a,bb',cb')$&$c'+ca'+ba'+ac'$\\              
$(a',ab',ac')$&$a+b'+c'+aa'$&$(b,aa',ca')$&$c'+cb'+bc'+ab'$\\              
$(b,cb',ab')$&$a+b'+c+bb'$&$(a',bb',bc')$&$c+ca'+ab'+ac'$\\  
$(b',bc',ba')$&$a'+b+c'+bb'$&$(b',aa',ac')$&$c+cb'+ba'+bc'$\\  
$(c,bc',ac')$&$a+b+c'+cc'$&$(a,cc',bc')$&$b'+ba'+ca'+ab'$\\
$(a,ba',ca')$&$a'+b+c+aa'$&$(c,aa',ba')$&$b'+ac'+bc'+cb'$\\  
$(aa',bc',cb')$&$a+a'+bb'+cc'$&$(a',cc',cb')$&$b+ba'+ac'+ab'$\\              
$(bb',ca',ac')$&$b+b'+aa'+cc'$&$(c',aa',ab')$&$b+bc'+ca'+cb'$\\           
  $(cc',ab',ba')$&$c+c'+aa'+bb'$&$(b,cc',ac')$&$a'+ab'+ba'+cb'$\\
&&$(c,bb',ab')$&$a'+ac'+bc'+ca'$\\
&&$(b',cc',ca')$  &$a+ab'+bc'+ba'$\\
&&$(c',bb',ba')$& $a+ac'+ca'+cb'$\\      
\hline  
\end{tabular}
\end{table}

\subsection{The bijection $\Psi$ and supporting curves}\label{results}

We now consider the bijection \eqref{ST} 
in the case where $d=3$.
Applying our earlier results, we deduce the supporting curve $D_{\Psi(\xi)}$ for each $\xi\in S$ and then examine this result in relation to the Hudson table (Figure~\ref{mult}).

\begin{prop}\label{support}
Suppose $\xi\in S'$. If $q(\xi)=0$, then 
  $D_{\Psi(\xi)}=C$, and if $q(\xi)=1$, then
$D_{\Psi(\xi)}=B_x$ for some $x\in 10_1$.

In the unique case where $\xi\in S\setminus S'$, $D_{\Psi(\xi)}=C$.
\end{prop}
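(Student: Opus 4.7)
The plan is to combine Proposition~\ref{determinant} with the explicit description of $R$ in \S\ref{isokl} and a counting argument for the exceptional point.

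First, observe that for any $\xi \in S$, the sheaf $\Psi(\xi)$ lies in $R$, so its supporting curve $D_{\Psi(\xi)}$ is $[-1_A]$-invariant and is therefore either $C \in |L|^-$ or one of the nodal curves $B_x \in |L|^+$ for some $x \in 10_1$, by the description in \S\ref{isokl}.

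Now suppose $\xi \in S'$. Since $d = 3 \equiv 3 \pmod 4$, Proposition~\ref{determinant} applies:
\begin{itemize}
\item If $q(\xi) = 0$, then $10_1 \subset D_{\Psi(\xi)}$. A curve $B_x \in |L|^+$ cannot contain $10_1$, since $B_x \cap C$ consists of at most $L^2 = 6$ points (with multiplicity), while $C$ already contains all ten points of $10_1$; thus $D_{\Psi(\xi)} \neq B_x$ for any $x$, forcing $D_{\Psi(\xi)} = C$.
\item If $q(\xi) = 1$, then $6_1 \subset D_{\Psi(\xi)}$. Since $C \cap A[2] = 10_1$ and $6_1 \cap 10_1 = \emptyset$, the curve $C$ does not contain $6_1$, so $D_{\Psi(\xi)} \neq C$ and hence $D_{\Psi(\xi)} = B_x$ for some $x \in 10_1$.
\end{itemize}

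For the remaining case, I would use the bijectivity of $\Psi \colon S \to R$ together with a direct count. From \S\ref{isokl}, $R$ contains exactly $16$ sheaves supported on $C$ and exactly $20$ sheaves supported on the curves $B_x$ (two per $x \in 10_1$). From the table~\eqref{zero_one} with $d = 3$, the set $S'$ contains $15$ elements with $q(\xi) = 0$ and $20$ with $q(\xi) = 1$. By the previous two cases, these $15$ and $20$ elements map bijectively into the $C$-supported and $B_x$-supported subsets of $R$, respectively; the latter is already exhausted. Hence the unique element $\xi \in S \setminus S'$ must map to one of the remaining $16 - 15 = 1$ sheaves supported on $C$, giving $D_{\Psi(\xi)} = C$.

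The main obstacle is the $q(\xi) = 0$ case, which hinges on ruling out a $B_x$ containing the entire ten-element set $10_1$; this is handled cleanly by the intersection-number inequality $L^2 = 6 < 10$. The exceptional point is treated only indirectly, via counting, because Proposition~\ref{determinant} was proved under the assumption that $\xi$ consists of $d$ distinct points and so does not apply directly there.
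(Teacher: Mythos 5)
Your proposal is correct and follows essentially the same route as the paper: the classification of supports of elements of $R$ as $C$ or $B_x$, Proposition~\ref{determinant} for the two $S'$ cases, and the $16$ versus $15$ count to force the exceptional point onto $C$. The only (harmless) variation is in the $q(\xi)=0$ case, where you rule out $B_x\supseteq 10_1$ via the intersection bound $B_x\cdot C=L^2=6<10$, whereas the paper simply invokes the known fact that $B_x\cap A[2]=6_1\cup\{x\}$ contains just one point of $10_1$.
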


\begin{proof}
As discussed in \S\ref{isokl}, for any $r\in R$, $D_r$ is either
$C$ or $B_x$ for $x\in 10_1$.
Since $C\cap A[2]=10_1$ and $B_x\cap A[2]=6_1\cup \{x\}$, the statement in the case where $\xi\in S'$ is an immediate consequence of Proposition~\ref{determinant}.
Since $R$ contains
$16$ sheaves that are supported on $C$ and the image of $S'$
under $\Psi$ 
has accounted for $15$ of them, the last case is forced.
\end{proof}  

We now remark on how the points in each $\xi\in S'$ are situated in the Hudson table of Figure~\ref{mult}
and then relate this information to 
the supporting curve of $\Psi(\xi)$.

There are fifteen $\xi\in S'$ where $q(\xi)=0$.
In six cases, all three points in $\xi$ are in $10_1$ and they 
occupy three distinct rows and columns of the Hudson table.
We illustrate this with $\xi=(c,c',cc')$ on the left of \eqref{C_examples}; we have placed stars at $c,c',cc'$ and dots elsewhere.
In the other nine cases, $\xi$ contains two points in $6_1$ -- one in the top row, one from the leftmost column of the Hudson table --
and one point in $10_1$, which is their product. 
For example, $\xi=(ba',bc',b')$ is on the right side of \eqref{C_examples}.
\begin{equation}\label{C_examples}
\begin{tabular}{c|ccc}
		$\cdot$&$\cdot$&$\cdot$&$\cdot$\\
	\hline
	$\cdot$&$\cdot$&$\ast$&$\cdot$\\
	$\cdot$&$\ast$&$\cdot$&$\cdot$\\
	$\cdot$&$\cdot$&$\cdot$&$\ast$
\end{tabular}  
\quad\text{or}\quad
\begin{tabular}{c|ccc}
			$\cdot$&$\cdot$&$\ast$&$\cdot$\\
			\hline
			$\cdot$&$\cdot$&$\cdot$&$\cdot$\\
			$\ast$&$\cdot$&$\ast$&$\cdot$\\
			$\cdot$&$\cdot$&$\cdot$&$\cdot$
		\end{tabular}
\end{equation}  
There are twenty $\xi\in S'$ where $q(\xi)=1$. In two cases, all three points in $\xi$ are in $6_1$:  
all three points are either in the leftmost column or in the top row; $\xi=(ab',bc',ca')$ is illustrated on the right in \eqref{Bx_examples}. In the other eighteen cases,  $\xi$ has two points in $10_1$ that share a row or column, and one point in $6_1$, which occupies a distinct column or row from the other points. We illustrate this pattern with $\xi=(ba',bb',c')$ on the left of \eqref{Bx_examples}. 
\begin{equation}\label{Bx_examples}
\begin{tabular}{c|ccc}
		$\cdot$&$\cdot$&$\cdot$&$\cdot$\\
	\hline
	$\cdot$&$\cdot$&$\cdot$&$\ast$\\
	$\ast$&$\cdot$&$\cdot$&$\cdot$\\
	$\cdot$&$\cdot$&$\cdot$&$\ast$
\end{tabular}  
  \quad\text{or}\quad
\begin{tabular}{c|ccc}
			$\cdot$&$\ast$&$\ast$&$\ast$\\
			\hline
			$\cdot$&$\cdot$&$\cdot$&$\cdot$\\
			$\cdot$&$\cdot$&$\cdot$&$\cdot$\\
			$\cdot$&$\cdot$&$\cdot$&$\cdot$
		\end{tabular}
\end{equation}

\begin{prop}\label{Bx}
Let $\xi=(u,v,w)\in S'$ such that $q(\xi)=1$. 
Let $x\in A[2]$ be the unique entry in the Hudson table
that is distinct from $u,v,w$ but shares a row or column with each of them.
Equivalently, $x$ is the unique element of $A[2]$ so that $\{xu,xv,xw\}\subseteq 6_1$.
Then, $D_{\Psi(\xi)}=B_x$.
\end{prop}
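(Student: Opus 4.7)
The plan is to identify the element $x$ described in the statement with the nodal singularity $y$ of the supporting curve $B_y = D_{\Psi(\xi)}$ provided by Proposition~\ref{support}, via an eigenspace analysis of the map $(\star)$ from \S\ref{notiso}. The equivalence of the two characterizations of $x$ follows immediately from Lemma~\ref{prod}: $xp \in 6_1$ if and only if $x$ and $p$ share a row or column of the Hudson table.

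The bulk of the argument will show that any $x \in A[2]$ satisfying $\{xu, xv, xw\} \subseteq 6_1$ must equal the nodal point $y$. First, summing the three defining equations $q_1(xu) = q_1(xv) = q_1(xw) = 1$, expanding each via $q_1(xp) = q_1(x) + q_1(p) + \langle x, p \rangle$, and using $uvw = 1$ (so the pairing contributions cancel) together with $q_1(u) + q_1(v) + q_1(w) = q(\xi) = 1$, forces $q_1(x) = 0$ in $\F_2$, i.e., $x \in 10_1$. Next, I will analyze $(\star)$ for the even theta characteristic $q_x$: by \S\ref{base}, $\dim H^0(A, L \otimes P_x)^+ = (d+1)/2 = 2$, while by Lemma~\ref{sixten} the condition $xu \in 6_1$ translates to $q_x(u) = 1$, and likewise $q_x(v) = q_x(w) = 1$. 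Hence the $[-1_A]^*$-action on $H^0(A, \O_\xi \otimes L \otimes P_x)$ has eigenvalue $-1$ on each of its three one-dimensional summands, so the codomain's $+1$-eigenspace vanishes. The restriction of $(\star)$ to $+1$-eigenspaces is thus a map from a $2$-dimensional space to the zero space, with kernel of dimension at least $2$. By Remark~\ref{singular}, $x$ is then a singular point of $D_{\Psi(\xi)}$, and since $B_y$ has a unique nodal singularity at $y$, we conclude $x = y$, hence $D_{\Psi(\xi)} = B_x$. Uniqueness of such an $x$ follows from the same chain of deductions.

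For existence I will give a brief combinatorial argument in the Hudson table, matching the types illustrated in \eqref{Bx_examples}: either all three points of $\xi$ lie in a single row (or column) of $6_1$, in which case $x = 1$; or exactly two of them lie in $10_1$ sharing a row (respectively column) and the third lies in $6_1$, in which case $x$ is the unique entry at the intersection of the shared row (respectively column) of the two $10_1$-points with the column (respectively row) of the $6_1$-point. Using the decomposition of $A[2]$ as a direct sum of the row-$0$ and column-$0$ subgroups, this entry is visibly in $10_1$ in every case. The main obstacle I anticipate is simply organizing these cases cleanly; no deeper geometric input is required beyond what is already assembled in Proposition~\ref{support} and Remark~\ref{singular}.
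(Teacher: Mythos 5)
Your proof is correct and follows essentially the same route as the paper: both detect $x$ by comparing the multiplicities of the $[-1_A]^*$-eigenvalues on the domain and codomain of $(\star)$ for the even theta characteristic $q_x$, using Lemma~\ref{sixten} to translate $xu,xv,xw\in 6_1$ into $q_x(u)=q_x(v)=q_x(w)=1$, together with the same two-case Hudson-table analysis for existence. The only differences are cosmetic: you derive $q_1(x)=0$ algebraically from $uvw=1$ and $q(\xi)=1$ rather than reading it off the table, and you pass through Remark~\ref{singular} and the uniqueness of the node of $B_y$ to conclude $x=y$, where the paper gets this more directly from $x\in D_{\Psi(\xi)}\cap 10_1=\{y\}$ since $B_y\cap A[2]=6_1\cup\{y\}$.
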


\begin{proof}
By Proposition~\ref{support}, since $q(\xi)=1$, we have
$D_{\Psi(\xi)}=B_x$ for some $x\in 10_1$.

Taking the approach of Sections \ref{curve} and \ref{notiso}, we know that $x$ is the unique element of $10_1$
so that \hyperlink{star}{$(\star)$} of \eqref{les} fails to be an isomorphism.

For any $x\in 10_1$,
the eigenvalues of $[-1_A]$ acting on $H^0(A,L\otimes P_x)$ are $1,1,-1$. 
The eigenvalues of $[-1_A]$ acting on
$H^0(A,\O_{\xi}\otimes L\otimes P_x)$ are determined by the quadratic form $q_x$. The multiplicities of $1$ and $-1$ are equal to $|\xi\cap 10_x|$ and
$|\xi\cap 6_x|$. In terms of the Hudson table, $|\xi\cap 6_x|$ is the number of points in $\xi$, other than $x$, that share a row or column with $x$.

From our above analysis of the Hudson tables, we know there is a unique $x\in 10_1$ so that $|\xi\cap 6_x|=3$, and thus the eigenvalues of $[-1_A]$ acting on $H^0(A,\O_{\xi}\otimes L\otimes P_x)$ are $-1$ with multiplicity $3$, so \hyperlink{star}{$(\star)$} cannot be an isomorphism and the result is proved.

The alternate characterization of $x$ in the statement of the proposition is an immediate consequence of Lemma~\ref{prod}.
\end{proof}

\begin{rmk}
The proofs of Propositions~\ref{support} and \ref{Bx} show that we are able to detect all of the $2$-torsion points in the support of the curves $\Psi(\xi), \xi\in S$ via differences in the eigenvalues of $[-1_A]$ acting on the domain and codomain of \hyperlink{star}{$(\star)$}.
In light of Remark~\ref{singular}, these proofs have detected all the singular points in these curves as well.
\end{rmk}  

We illustrate Proposition~\ref{Bx}
in each of the examples shown in \eqref{Bx_examples}
by placing a bullet at the values of $x$:
\[
\begin{tabular}{c|ccc}
		$\cdot$&$\cdot$&$\cdot$&$\cdot$\\
	\hline
	$\cdot$&$\cdot$&$\cdot$&$\ast$\\
	$\ast$&$\cdot$&$\cdot$&$\bullet$\\
	$\cdot$&$\cdot$&$\cdot$&$\ast$
\end{tabular}  
  \quad\text{or}\quad
\begin{tabular}{c|ccc}
			$\bullet$&$\ast$&$\ast$&$\ast$\\
			\hline
			$\cdot$&$\cdot$&$\cdot$&$\cdot$\\
			$\cdot$&$\cdot$&$\cdot$&$\cdot$\\
			$\cdot$&$\cdot$&$\cdot$&$\cdot$
		\end{tabular}
              \]
In Table~\ref{correspond}, 
we show the $3$ points on $A$ that are in $S'$ side by side with
divisors correpsonding to 
sheaves in $R$ that are supported on the same curve as their images under $\Psi$.
In the two columns on the right, the points are grouped by the values of $x$ in $B_x$, which may be read off from the first summand of each divisor in the rightmost column.

\begin{rmk}\label{bijection} It is also possible to formulate the specific correspondence between $S$ and $R$ listed in Table~\ref{correspond} in terms of the Hudson table, which we conjecture is the bijection given by $\Psi$.

Let $\xi=(u,v,w)\in S'$.
There is a point $z\in A[2]$ so that $(u,v,w)$ is paired with the $z+zu+zv+zw$ in Table~\ref{correspond}, which is selected in the following way:
If $q(\xi)=1$ then $z\in A[2]$ is the unique point so that $\{zu,zv,zw\}\subseteq 6_1$ (Proposition~\ref{Bx}).
If $q(\xi)=0$ then $z\in A[2]$ is a point such that $\{zu,zv,zw\}\subseteq 10_1$. Although $z$ is not always unique, the divisor $z+zu+zv+zw$ will be the same for any such choice of $z$.
When $\{u,v,w\}\subseteq 10_1$ are in three distinct rows and three distinct columns in the Hudson table, $z$ is unique, and, in fact, must be $1$.
 In the case where $u\in 10_1$ and $v,w,\in 6_1$, there are four possible values of $z$. The possible values of $z$ in the examples \eqref{C_examples} are marked with diamonds below:
\[
\begin{tabular}{c|ccc}
		$\diamond$&$\cdot$&$\cdot$&$\cdot$\\
	\hline
	$\cdot$&$\cdot$&$\ast$&$\cdot$\\
	$\cdot$&$\ast$&$\cdot$&$\cdot$\\
	$\cdot$&$\cdot$&$\cdot$&$\ast$
\end{tabular}  
\quad\text{or}\quad
\begin{tabular}{c|ccc}
			$\cdot$&$\cdot$&$\ast$&$\cdot$\\
			\hline
			$\cdot$&$\diamond$&$\cdot$&$\diamond$\\
			$\ast$&$\cdot$&$\ast$&$\cdot$\\
			$\cdot$&$\diamond$&$\cdot$&$\diamond$
\end{tabular}
\]
\end{rmk}

\section{Action of the symplectic group}\label{sp}

Let $A$ be an abelian surface with a symmetric polarization $L$ of odd degree.
As discussed in \S\ref{label}, the Weil pairing gives a nondegenerate, strictly alternating form on $A[2]$. In this section we examine the action of its symplectic group on the set $S'$. Some references for material in this section are \cite{GrossHarris,Farkas}.

\subsection{Action of $\Spa$ on $S'$}
Let $\Spa$ be the group of all $\F_2$-linear transformations $T:A[2]\to A[2]$ that preserve the Weil pairing. $\Spa$ has order $720$ and is generated bythe $16$ transvections from each $u\in A[2]$:
\[
T_u(v)=v+\langle v,u\rangle u.
\]  
The group $\Spa$ also acts on theta characteristics of $(A,L)$.
If $q$ is a theta characteristic and $T\in \Spa$, then  $Tq(Tv)=q(v)$ for each $v\in A[2]$.

There is a natural action of $\Spa$ on $S'$: if $\xi=(u_1,\ldots,u_d)$, then $T(\xi)=(Tu_1,\ldots,Tu_d)$.
The points $Tu_1,\ldots,Tu_d$ are distinct since $T$ is an isomorphism and 
since addition on $A[2]$ as an $\F_2$ vector space coincides with the group operation of $A$, the elements
$Tu_1,\ldots,Tu_d$ will also sum to the identity.

Many of our results depend on the interaction between  $S'$ and theta characteristics. The action of $\Spa$ preserves these interactions, as we now show.

\begin{lemma}
Let $\xi\in S'$ and $T\in\Spa$. Then $q(\xi)=q(T\xi)$.
\end{lemma}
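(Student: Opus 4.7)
The plan is to exploit two pieces of flexibility already established: first, by Lemma~\ref{qform}, the value $q(\xi) = \sum_{i=1}^d q_v(u_i)$ does not depend on the choice of theta characteristic $q_v$ used to evaluate it (since the $u_i$ sum to the identity); second, the group $\Sp(A[2])$ acts naturally on the set of theta characteristics via $T\cdot q := q \circ T^{-1}$, and this action sends theta characteristics to theta characteristics because $T$ preserves the Weil pairing. So the strategy is simply to compute $q(T\xi)$ using a cleverly chosen theta characteristic, namely $T\cdot q_v$, which was designed to match up with the translated tuple.

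Concretely, the computation I would carry out is
\begin{align*}
q(T\xi) \;=\; \sum_{i=1}^d (T\cdot q_v)(Tu_i) \;=\; \sum_{i=1}^d q_v\bigl(T^{-1}(Tu_i)\bigr) \;=\; \sum_{i=1}^d q_v(u_i) \;=\; q(\xi),
\end{align*}
where the first equality uses Lemma~\ref{qform} (to evaluate $q(T\xi)$ against the particular theta characteristic $T\cdot q_v$), and the last is the definition of $q(\xi)$. One only needs to verify that the tuple $T\xi = (Tu_1,\ldots,Tu_d)$ still lies in $S'$, so that Lemma~\ref{qform} applies in the first step; this is the observation already made just before the lemma (the $Tu_i$ are distinct since $T$ is a bijection, and they sum to the identity since $T$ is $\F_2$-linear).

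The only slightly delicate point --- and the place where I would be careful --- is confirming that $T\cdot q_v$ really is a theta characteristic for the Weil pairing, rather than merely an arbitrary $\F_2$-valued function. This amounts to checking the compatibility $(T\cdot q_v)(xy) + (T\cdot q_v)(x) + (T\cdot q_v)(y) = \langle x,y\rangle$ for all $x,y\in A[2]$, which follows immediately from the analogous identity for $q_v$ together with the fact that $T^{-1}\in\Sp(A[2])$ preserves $\langle\,,\,\rangle$. With that in hand, the equation above finishes the proof in one line.
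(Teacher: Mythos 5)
Your proof is correct and follows essentially the same route as the paper: both arguments evaluate $q(T\xi)$ against the transported theta characteristic $Tq_v$ (your $q_v\circ T^{-1}$), invoke the independence of $q(\cdot)$ from the choice of theta characteristic (Lemma~\ref{qform}), and use the defining identity $Tq_v(Tu_i)=q_v(u_i)$ to conclude. Your extra checks---that $T\xi\in S'$ and that $q_v\circ T^{-1}$ is genuinely a theta characteristic---are the same facts the paper records just before the lemma.
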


\begin{proof}
Let $\xi=(u_1,\ldots,u_d)$.
Pick a theta characteristic $q_v$.
By definition,
$q(\xi)=q_v(u_1)+\cdots+q_v(u_d)$.
Then, since we may use any choice of
theta characteristic to determine
$q(T\xi)$, we have
$$q_v(u_1)+\cdots+q_v(u_d)=
Tq_v(Tu_1)+\cdots+Tq_v(Tu_d)=q(T\xi),$$
concluding the proof.
\end{proof}

\begin{prop}\label{Tq} Let $\xi\in S'$, $T\in \Spa$
  and $q_v$ a theta characteristic.
  Let
$L\otimes P_x$ and $L\otimes P_y$ be
  the line bundles corresponding to $q_v$ and $Tq_v$. 
We construct long exact sequences  as in \eqref{les}
using $\xi,L\otimes P_x$ and $T(\xi),L\otimes P_y$, which contain the following maps in the position \hyperlink{star}{$(\star)$}:
\begin{align*}
\varphi:H^0(A,L\otimes P_x)
&\to 
  H^0(A,\O_{\xi}\otimes L\otimes P_x),
\\
\psi:H^0(A,L\otimes P_y)
&\to 
  H^0(A,\O_{T(\xi)}\otimes L\otimes P_y).
\end{align*}
The eigenvalues of $[-1_A]$ acting on the domains of 
$\varphi$ and $\psi$ are the same.
The eigenvalues of the codomains are also the same.
\end{prop}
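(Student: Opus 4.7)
The plan is to reduce both assertions to two facts: $\Sp(A[2])$ preserves the Arf invariant of a quadratic form, and the compatibility formula $Tq_v(Tu)=q_v(u)$ preserves the value of a theta characteristic on the corresponding point of $A[2]$. First I would handle the domains. By the review in \S\ref{base}, the eigenspace dimensions of $[-1_A]^*$ on $H^0(A,L\otimes P_x)$ are $(\tfrac{d+1}{2},\tfrac{d-1}{2})$ for the $(+1,-1)$ eigenspaces when $q_v$ is even, and swapped when $q_v$ is odd. Since $T\in\Sp(A[2])$ preserves the Weil pairing, it preserves the Arf invariant, so $Tq_v$ has the same parity as $q_v$, and hence $H^0(A,L\otimes P_y)$ has the same eigenspace dimensions as $H^0(A,L\otimes P_x)$.

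Next I would treat the codomains. As in the proof of Proposition~\ref{determinant}, when $\xi=(u_1,\ldots,u_d)\in S'$ the sheaf $\O_\xi\otimes L\otimes P_x$ decomposes as a direct sum of skyscrapers at the $u_i$, and the action of $[-1_A]^*$ on the skyscraper at $u_i$ is the scalar $(-1)^{q_v(u_i)}$. Thus the multiplicities of $+1$ and $-1$ on $H^0(A,\O_\xi\otimes L\otimes P_x)$ are $\#\{i\mid q_v(u_i)=0\}$ and $\#\{i\mid q_v(u_i)=1\}$, respectively. Applying the same analysis to $T\xi=(Tu_1,\ldots,Tu_d)$ and the theta characteristic $Tq_v$, the multiplicities become $\#\{i\mid Tq_v(Tu_i)=0\}$ and $\#\{i\mid Tq_v(Tu_i)=1\}$. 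Since $Tq_v(Tu_i)=q_v(u_i)$ by definition of the action of $\Sp(A[2])$ on theta characteristics, these multiplicities agree with those for $\varphi$, which finishes the argument.

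The step I expect to require the most care is making the identification of the codomain with $\bigoplus_i (L\otimes P_x)|_{u_i}$ and verifying that $[-1_A]^*$ acts on the skyscraper at $u_i$ via $(-1)^{q_v(u_i)}$; this is the essential input that translates the representation-theoretic question into a combinatorial one about values of $q_v$ on $\xi$. Once that is in hand, the two assertions of the proposition reduce to the two preservation properties of $\Sp(A[2])$ noted above, and no further computation is needed.
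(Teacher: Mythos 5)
Your proposal is correct and follows essentially the same route as the paper's proof: the domains are handled via the invariance of the Arf invariant under $\Sp(A[2])$, and the codomains via the skyscraper decomposition and the identity $Tq_v(Tu_i)=q_v(u_i)$, exactly as in the text (which cites the proof of Proposition~\ref{determinant} for the eigenvalue computation on $H^0(A,\O_\xi\otimes L\otimes P_x)$). No gaps.
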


In particular, if $\varphi$ cannot be an isomorphism due to the eigenvalues of $[-1_A]$ occurring with different multiplicities in its domain and codomain, then $\psi$ will also not be an isomorphism.

\begin{proof}
  As described in \S\ref{base}, the eigenvalues of the domains of $\varphi$ and $\psi$ are determined by whether the theta characteristics $q_v$ and $Tq_v$ are even or odd. The action of $\Spa$ on theta characteristics preserves the Arf invariant
  \cite[Proposition~1.11]{GrossHarris}, and thus we have the result for the domains.

  The multiplicities of the eigenvalues $1$ and $-1$ on the codomain of $\varphi$ are $\#\{i\mid q_v(u_i)=0\}$ and $\#\{i\mid q_v(u_i)=1\}$, respectively. The multiplicities
  on the codomain of $\psi$ are 
$\#\{i\mid Tq_v(Tu_i)=0\}$ and $\#\{i\mid Tq_v(Tu_i)=1\}$.
Since $Tq_v(Tu_i)=q_v(u_i)$, we have the result. 
\end{proof}

\subsection{Action of $\Spa$ in the $d=3$ case}

We now assume that $L$ is a $(1,3)$-polarization and examine the orbits of
$\Spa$ acting on $S'$.

\begin{prop}\label{twoorbits}
The action of $\Spa$ on $S'$ has two orbits, which consist of $\xi\in S'$ where $q(\xi)=0$ and $q(\xi)=1$, respectively.
\end{prop}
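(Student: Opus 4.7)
The plan is to identify $S'$ with a well-understood set of subspaces of $A[2]$ and then reduce the transitivity claim to Witt's extension theorem for symplectic forms.

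First I would observe that, because $u_1, u_2, u_3 \in A[2]$ are distinct and sum to the identity, none of them can be $1$ (otherwise two of the remaining points would coincide), and the set $\{1, u_1, u_2, u_3\}$ is precisely a $2$-dimensional $\F_2$-subspace $W_\xi \subseteq A[2]$. Thus $\xi \mapsto W_\xi$ gives a bijection between $S'$ and the $2$-dimensional subspaces of $A[2]$, of which there are $\frac{(2^4-1)(2^4-2)}{(2^2-1)(2^2-2)} = 35$, matching $\gamma_3$.

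Next I would compute $q(\xi)$ in terms of the Weil pairing on $W_\xi$. Using the quadratic-bilinear relation $q_v(x+y) = q_v(x) + q_v(y) + \langle x, y\rangle$ for any theta characteristic $q_v$, and the fact that $u_3 = u_1 + u_2$, one gets
\[
q(\xi) \;=\; q_v(u_1) + q_v(u_2) + q_v(u_1 + u_2) \;=\; \langle u_1, u_2\rangle.
\]
Hence $q(\xi) = 0$ if and only if $W_\xi$ is totally isotropic for the Weil pairing, and $q(\xi) = 1$ if and only if $W_\xi$ is a hyperbolic plane. Among the $35$ two-dimensional subspaces, $15$ are Lagrangian (totally isotropic) and $20$ are hyperbolic, agreeing with the entry for $d = 3$ in \eqref{zero_one}.

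Finally I would invoke Witt's extension theorem for the symplectic space $(A[2], \langle\,,\,\rangle)$: any linear isometry between two subspaces of $A[2]$ extends to an element of $\Sp(A[2])$. Given two totally isotropic $2$-planes $W, W'$, any $\F_2$-linear isomorphism $W \xrightarrow{\sim} W'$ is automatically an isometry (both pairings are zero) and thus extends; given two hyperbolic planes, one picks symplectic bases $\{e_1, e_2\}$ of $W$ and $\{e_1', e_2'\}$ of $W'$ and extends the isometry $e_i \mapsto e_i'$. This shows $\Sp(A[2])$ acts transitively on each of the two types of $2$-planes, so transitively on each of the two level sets of $q$ in $S'$. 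Combined with the lemma already established that $q(T\xi) = q(\xi)$, this yields exactly two orbits, proving the proposition.

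The only potentially subtle point is the verification $q(\xi) = \langle u_1, u_2\rangle$, but it is a one-line consequence of $u_1 + u_2 + u_3 = 0$ together with the quadratic-bilinear identity; once this is in hand the rest is standard symplectic linear algebra and no further obstacle is anticipated.
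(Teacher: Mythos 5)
Your argument is correct, and it takes a genuinely different route from the paper's proof. The paper proceeds by explicit computation in the group $\Spa$: it uses Proposition~\ref{Bx} to attach to each $\xi$ with $q(\xi)=1$ a distinguished point $x\in 10_1$, moves a reference element $(ab',bc',ca')$ around by the transvections $T_v$ for $v\in 10_1$, uses $T_c\circ T_b\circ T_a$ to swap the two triples attached to each $B_x$, and then disposes of the $q(\xi)=0$ elements by asserting a direct check with successive transvections. Your proof replaces all of this with the observation that $\xi\mapsto W_\xi=\{1,u_1,u_2,u_3\}$ is an $\Spa$-equivariant bijection from $S'$ onto the $35$ two-dimensional subspaces of $A[2]$, that the identity $q(\xi)=\langle u_1,u_2\rangle$ (immediate from $q_v(u_1u_2)=q_v(u_1)+q_v(u_2)+\langle u_1,u_2\rangle$ and $u_3=u_1u_2$) converts the dichotomy $q(\xi)=0$ versus $q(\xi)=1$ into ``totally isotropic plane'' versus ``hyperbolic plane,'' and that Witt extension (or the standard transitivity of a symplectic group on Lagrangians and on hyperbolic planes) finishes the job. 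What your approach buys is a conceptual explanation of the counts $15$ and $20$ and a proof with no case checking; what the paper's approach buys is explicit group elements realizing the transitivity, which it reuses (e.g.\ the fact that $T_v$ carries the reference $\xi$ to one supported on $B_v$) to track how $\Psi(\xi)$ moves among the curves $B_x$. One small point worth making explicit if you write this up: since elements of $S'$ are unordered subschemes, transitivity on subspaces does immediately give transitivity on the corresponding $\xi$, because $\xi$ is recovered from $W_\xi$ as its set of nonidentity elements; and the combination with the already-proved invariance $q(T\xi)=q(\xi)$ is what shows the two level sets are not merged into one orbit.
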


\begin{proof}
  We first examine the elements  $\xi\in S'$ where $q(\xi)=1$.
  As described in Proposition~\ref{Bx}, for each such $\xi$ there is a unique $x\in 10_1$ so that the map \hyperlink{star}{$(\star)$} associated to $\xi$ and $q_x$ is not an isomorphism, which is witnessed by the eigenvalues of $[-1_A]$ acting on the domain and codomain of \hyperlink{star}{$(\star)$}.
For example, for $(ab',bc',ca')\in S'$, which
illustrated on the right-hand side of \eqref{Bx_examples},
 we have $x=1$.

 For any $v\in 10_1$, $T_v(q_1)=q_v$. We may see this from observing that $T_v(10_1)=10_v$ and $T_vq_1(T_v(10_1))=q_1(10_1)$. Thus for each $v\in 10_1$,
 $\Psi(T_v(ab',bc',ca'))$ is supported on the curve $B_v$.

 The composition of transvections $T_c\circ T_b\circ T_a$ fixes $q_v$ for $v\in 10_1$ but exchanges the two sets of points $\{ab',bc',ca'\}$ and $\{ac',ba',cb'\}$. Thus we see that the $20$ points
 $\xi\in S'$ where $q(\xi)=1$ form one orbit under the action of $\Spa$.

 It can be checked directly by successively applying transvections that the remaining points also form an orbit.
\end{proof}

\section{Some examples in the $(1,5)$-polarized case}\label{onefive}
It is possible to use the methods of sections \ref{curve}, \ref{notiso} and \ref{sp}
to analyze the bijection $\Psi:S\to R$ for any odd $d$ and make deductions about singularities at points in $\hat{A}[2]$ occurring in the supporting curves of $R$.
In this section, we show
some consequences of these ideas for 
examples of $\xi\in S$ when $d=5$.

Let $A$ be an
abelian surface that is $(1,5)$-polarized by a symmetric line bundle $L$ so that $\NS(A)=\Z l$, where $l=[L]$. We assume that $L$ is associated to an even theta characteristic $q_1$.
In this setting, $K_4A$ and $K_{\hat{A}}(0,l,-1)$ are $8$-folds, and the linear system $|L|$ contains curves of genus $6$. The vector space $H^0(A,L)$ is $5$-dimensional. Its two eigenspaces under the action of $[-1_A]$ are
a net $|L|^+$ and
a pencil $|L|^-$ with base loci $6_1$ and $10_1$, resp.

\begin{example}\label{ex.on}
  Consider the point $\xi=(ab',a',c,b,c')\in S$. Since $q(\xi)=1$, by Proposition~\ref{determinant}, $D_{\Psi(\xi)}\supseteq 10_1$.

  Moreover, for any $x\in 10_1$, the eigenvalues of $[-1_A]$ acting on $H^0(A,L\otimes P_x)$ are $1$ with multiplicity $3$ and
  $-1$ with multiplicity $2$. The eigenvalues of
  $[-1_A]$ acting on $H^0(A,\O_{\xi}\otimes L\otimes P_x)$ are either
$1$ with mult.\ $1$ and $-1$ with mult.\ $4$
or
$1$ with mult.\ $3$ and $-1$ with mult.\ $2$, so it is possible the supporting curve $D_{\Psi(\xi)}$ is smooth at all the points of $10_1$.

Examining \hyperlink{star}{$(\star)$} for $x\in 6_1$, the eigenvalues of $[-1_A]$ occur with identical multiplicities in the domain and codomain, except when $x=ba'$.
Thus, $D_{\Psi(\xi)}\in |L|^-$ and has a singular point $ba'$.
\end{example}

\begin{example}\label{ex.tw}
  Consider the point $\xi=(ab',cb',a',bb',c')\in S$.
  Since $q(\xi)=0$, we have $6_1\subseteq D_{\Psi(\xi)}$.

  When $x\in 6_1$,
  the eigenvalues of $[-1_A]$ acting on $H^0(\O_{\xi}\otimes L\otimes P_x)$
  are $1$ with mult.\ $1$ and $-1$ with mult.\ $4$
  or $1$ with mult.\ $3$ and $-1$ with mult.\ $2$, which does not exclude the possibility that the points $6_1$ are smooth.

When $x\in 10_1$, the eigenvalues of $[-1_A]$ acting on the domain and codomain of
\hyperlink{star}{$(\star)$} differ when $x=b$ and $x=b'$.

Thus $D_{\Psi(\xi)}$ is contained in the net $|L|^+$. It also passes through the points $b$ and $b'$, and it is singular at both of those points.
\end{example}

\begin{example}\label{ex.th}
  Finally, we consider $\xi=(1,ac',ab',b',c')$. Again, $q(\xi)=0$, so $6_1\subseteq \Psi(\xi)$ and $D_{\Psi(\xi)}$ is contained in the net $|L|^+$.

However, when $x=b'$ or $x=c'$,  
the eigenvalues of $[-1_A]$ acting on $H^0(\O_{\xi}\otimes L\otimes P_x)$
are $1$ with mult.\ $5$, and so
$b'$ and $c'$ are singular points of~$D_{\Psi(\xi)}$.
\end{example}

We show Hudson tables below illustrating Examples~\ref{ex.on}, \ref{ex.tw}, and \ref{ex.th}. The points in $\xi$ are marked with stars and
the singular points on $D_{\Psi(\xi)}$ marked with bullets. In the rightmost table, two of the entries are marked with both bullets and stars.
\[
\begin{tabular}{c|ccc}
		$\cdot$&$\ast$&$\cdot$&$\cdot$\\
	\hline
	$\cdot$&$\ast$&$\ast$&$\cdot$\\
	$\bullet$&$\cdot$&$\cdot$&$\cdot$\\
	$\cdot$&$\ast$&$\cdot$&$\ast$
\end{tabular}  
\quad\quad
\begin{tabular}{c|ccc}
			$\cdot$&$\ast$&$\cdot$&$\cdot$\\
			\hline
			$\cdot$&$\ast$&$\cdot$&$\ast$\\
			$\cdot$&$\cdot$&$\bullet$&$\cdot$\\
			$\ast$&$\bullet$&$\cdot$&$\ast$
\end{tabular}
\quad\quad
\begin{tabular}{c|ccc}
			$\ast$&$\ast$&$\cdot$&$\cdot$\\
			\hline
			$\ast$&$\cdot$&$\cdot$&$\cdot$\\
  $\cdot$&$\cdot$&
$\ast$\hspace*{-1pt}\llap{\raisebox{1.2pt}{$\scriptscriptstyle\bullet$}}
                   &$\cdot$\\
  $\cdot$&$\cdot$&$\cdot$
                                             &
$\ast$\hspace*{-1pt}\llap{\raisebox{1.2pt}{$\scriptscriptstyle\bullet$}}
\end{tabular}
\]  
Although the $\xi$ in Examples~\ref{ex.tw} and \ref{ex.th}
have the same value of $q(\xi)$ and 
the curves $D_{\Psi(\xi)}$  both contain two singular points, they cannot be in the same orbit of $S'$ under the action of $\Spa$. 
By Proposition~\ref{Tq} and its proof, if we write $\xi=(u_1,\ldots,u_d)$, then the following sets of multiplicities are preserved by the action of $\Spa$:
$$\{\#\{i\mid q_v(u_i)=0\}\mid v\in A[2]\}\quad\text{and}\quad\{\#\{i\mid q_v(u_i)=1\mid v\in A[2]\}\}.$$
We see from the Hudson tables above that these values must differ between Examples~\ref{ex.tw} and \ref{ex.th} since the numbers of entries in the Hudson table that do not share a row or column with any $u_i\in \xi$ are different overall.

So, unlike the $d=3$ case, in the $d=5$ case, $S'$ has more than two orbits
under the action of $\Spa$. 

\bibliographystyle{alpha}
\bibliography{mainbib}

\end{document}